\newtheorem{theorem}{Theorem}
\newtheorem{lemma}{Lemma}
\newtheorem{assump}{Assumption}
\newtheorem{remark}{Remark}
\DeclareMathOperator*{\argmin}{arg\,min}
\DeclareMathOperator*{\arginf}{arg\,inf}
\begin{document}
\title{Limit Results for Estimation of Connectivity Matrix in Multi-layer Stochastic Block Models}

\author[1]{\large Wenqing Su}
\author[2]{\large \;Xiao Guo}
\author[1]{\large Ying Yang}
\affil[1]{\normalsize Department of Mathematical Sciences, Tsinghua University}
\affil[2]{\normalsize School of Mathematics, Northwest University}

\date{}  

\maketitle

\begin{abstract}
Multi-layer networks arise naturally in various domains including biology, finance and sociology, among others. The multi-layer stochastic block model (multi-layer SBM) is commonly used for community detection in the multi-layer networks. Most of current literature focuses on statistical consistency of community detection methods under multi-layer SBMs. However, the asymptotic distributional properties are also indispensable which play an important role in statistical inference. In this work, we aim to study the estimation and asymptotic properties of the layer-wise scaled connectivity matrices in the multi-layer SBMs. We develop a novel and efficient method to estimate the scaled connectivity matrices. Under the multi-layer SBM and its variant multi-layer degree-corrected SBM, we establish the asymptotic normality of the estimated matrices under mild conditions, which can be used for interval estimation and hypothesis testing. Simulations show the superior performance of proposed method over existing methods in two considered statistical inference tasks. We also apply the method to a real dataset and obtain interpretable results. 
\end{abstract}
\noindent
{\it Keywords: Multi-layer networks, Asymptotic properties, Spectral methods} 

\section{Introduction}
The multi-layer network has gained widespread attention for its ability to represent multiple relationships among the same entities of interest, thereby enhancing the understanding of complex network data \citep{mucha2010community, holme2012temporal, kivela2014multilayer, han2015consistent}. For example, the Worldwide Food and Agricultural Trade (WFAT) multi-layer trade network \citep{de2015structural}, encompassing trade relationships among the same countries (nodes) across different commodities (layers), offers a richer insight than a single-layer trade network. 

The multi-layer stochastic block model (multi-layer SBM) \citep{han2015consistent, paul2016consistent} has been popularly used for the community detection task in the multi-layer network, where each layer corresponds to a stochastic block model (SBM) \citep{holland1983stochastic}. In particular, given common unobserved communities (blocks), the edges in each layer are generated according to a layer-wise block probability matrix or the so-called connectivity matrix. Various methods have been proposed to detect the communities of multi-layer networks under the multi-layer SBM, see, e.g., \cite{han2015consistent, bhattacharyya2018spectral, paul2020spectral, jing2021community, arroyo2021inference, LeiJ2022Bias}. 

Most of the aforementioned literature on community detection of multi-layer SBMs is devoted to the statistical consistency. However, the asymptotic distributional properties are also vital and particularly useful in subsequent inference tasks. In the context of single-layer SBMs or other related models, various versions of asymptotically Gaussian behavior emerged for different purposes \citep{agterberg2023overview}. Specifically, \cite{tang2018limit} provided a central limit theorem for the eigenvectors of the normalized Laplacian matrix under the random dot product graph (RDPG) model \citep{young2007random}. \cite{rubin2022statistical} extended these asymptotic results to the generalized RDPG and applied the theory to SBMs. \cite{bickel2013asymptotic} and \cite{tang2022asymptotically} established the asymptotic normality results for the estimation of the connectivity matrix using the maximum likelihood method and spectral embedding, respectively. \cite{fan2022simple} tested whether two nodes from a mixed-membership model \citep{airoldi2008mixed} have the same membership parameters by establishing the asymptotic distributional properties. \cite{jin2022optimal} proposed a goodness-of-fit approach to select the number of communities and the asymptotic null distribution of the test statistic is normal. Also see \cite{agterberg2023overview} for a comprehensive overview of asymptotic normality in SBMs.

In the multi-layer SBM, the asymptotic distributional properties are crucial for subsequent inferences especially for those tailored for the multi-layer networks. However, as far as we are aware, the asymptotic properties under multi-layer SBMs are largely unexplored. An important inference task in multi-layer SBMs is to test whether networks across different layers are generated from the same SBM, essentially the same connectivity matrix given the consensus community memberships. For instance, in analyzing the WFAT multi-layer network, the goal  extends beyond the community detection to ascertaining whether trading patterns across different commodities are similar. Asymptotic properties play a fundamental role in such inference. In the literature, \cite{arroyo2021inference} and \cite{zheng2022limit} showed that the connectivity matrices estimated using the spectral embedding are asymptotically normal under the common subspace independent edge graph (COSIE) model. These studies necessitate that the connectivity matrices for \emph{all} network layers are full rank. However, in practical multi-layer networks like WFAT, not all layers carry comprehensive cluster information, potentially leading to missing clusters, which results in the  rank-deficiency of individual connectivity matrices \citep{su2023}. Moreover, the methods in \cite{arroyo2021inference} and \cite{zheng2022limit} treat multi-layer SBMs as special case of their COSIE model, rather than being specifically tailored for them. 


Motivated by the above problems, we study the asymptotic properties in the estimation of connectivity matrices under multi-layer SBMs, without imposing the full rank assumption on their populations. We develop a simple and efficient {s}pectral {c}lustering-based method for the \emph{{scaled} {c}onnectivity matrix} {e}stimation, where the scaled connectivity matrix is also called the score matrix \citep{arroyo2021inference}.
Under proper conditions, we establish the asymptotic normality of the estimated scaled connectivity matrices under multi-layer SBMs and its variants, the degree-corrected multi-layer SBM. We emphasize that the asymptotic properties can be used to perform various statistical inferences, such as interval estimation and hypothesis testing.

The main contributions of this paper are threefold. First, we systematically study and specifically tailor the asymptotic normality of the scaled connectivity matrix estimation under the multi-layer SBM and its variant, the multi-layer degree-corrected SBM. To the best of our knowledge, the asymptotic normality under the degree-corrected SBM is less explored, let alone the multi-layer degree-corrected SBM. Second, the conditions under which our results hold are relatively mild. Compared to previous work which necessitated full rank in each layer \citep{arroyo2021inference, zheng2022limit}, we allow individual connectivity matrices to be rank-deficient and only require their squared summation to be full rank. This adaptability makes our approach particularly suitable for analyzing multi-layer networks where each layer only captures partial underlying communities. Third, we apply the established asymptotic normality to two statistical inference tasks, one for interval estimation of the scaled connectivity matrices, and the other for testing whether the population matrices corresponding to different layers are the same. The numerical results show superior performance of the proposed method over compared methods in these tasks.  The analysis of a real-world trade network dataset provides interpretable results.

The rest of the paper is organized as follows. Section \ref{MSBM} introduces the estimation method of individual connectivity matrices in the multi-layer SBM. Section \ref{asymptotic} provides the asymptotic normality of the estimates. Section \ref{MDCSBM} extends the method to multi-layer degree-corrected SBMs and presents the corresponding asymptotic normality results. Sections \ref{simulations} and \ref{Real data analysis} include the simulation and real data experiment. Section \ref{conclusion} concludes the paper and provides possible extensions. Technical proofs are included in the Appendix.

\section{Methodology}\label{MSBM}
In this section, we first present and reparameterize the multi-layer stochastic block model. Then we propose the method for estimating the individual connectivity matrices. The following notation is used throughout the paper. 

\noindent\emph{Notation:} We use $[n]$ to denote the set $\{1,...,n\}$. For a matrix $A\in \mathbb R^{n\times n}$, $\|A\|_F$ and $\|A\|_{1, \infty}$ denote the Frobenius norm and the maximum row-wise $\ell_1$ norm, respectively. The spectral norm of a matrix and the Euclidean norm of a vector are denoted by $\|\cdot\|_2$. The notation ${\rm diag}(\cdot)$ is used to denote a diagonal matrix formed from either a vector or the diagonal elements of a matrix. The notation ${\rm vec}(\cdot)$ refers to the vectorization of the upper triangular part of a symmetric matrix, proceeding in a column-wise order. For two sequences $a_n$ and $b_n$, we write $a_n \lesssim b_n$ if there exists some constant $c>0$ such that $a_n \leq cb_n$, and use $a_n \asymp b_n$ to denote the case where both $a_n \lesssim b_n$ and  $b_n \lesssim a_n$. The notation $a_n = \omega(b_n)$ denotes that $b_n / a_n \rightarrow 0$ as $n$ goes to infinity, while $a_n = o(b_n)$ means that $b_n = \omega(a_n)$.

\subsection{Multi-layer SBMs}
Consider the multi-layer network comprising $L$ layers and $n$ shared nodes, with its symmetric adjacency matrices represented by $A_l$, where $A_l \in \{0, 1\}^{n\times n}$ for all $1 \leq l \leq L$. The multi-layer SBM \citep{han2015consistent, valles2016multilayer, paul2016consistent} is as follows. 

Assume the $n$ nodes are partitioned into $K$ layer-independent communities with the community assignment of node $i$ denoted by $g_i\in[K]$. With this community assignment, the element $A_{l,ij}(i<j)$ of $A_l(l\in [L])$ is generated independently as the following SBM \citep{holland1983stochastic},
\begin{equation*}\label{adj_generate}
	A_{l,ij}\sim {\rm Bernoulli}(\rho B_{l,g_ig_j}),
\end{equation*}
where $\rho\in(0,1]$ controls the overall sparsity of the generated networks, $B_l\in \mathbb [0,1]^{K\times K}$ denotes the \emph{heterogeneous} connectivity matrix, and we assume the diagonal entries $A_{l,ii}$'s are all 0. 

It is easy to see that $$Q_l=\rho \Theta B_l \Theta^T \in [0,1]^{n \times n}$$ is the population matrix for 
$A_l$, in the sense that $Q_l-{\rm diag}(Q_l)=\mathbb E(A_l)$, where $\Theta\in \{0,1\}^{n\times K}$ denotes the membership matrix with $\Theta_{ik}=1$ if and only if $k=g_i$ and $\Theta_{ik}=0$ otherwise. Define $\Delta = {\rm diag}(n_1, \ldots, n_K)$, where each diagonal element $n_k$ represents the number of nodes in community $k\in[K]$, we can then rearrange $Q_l$ as
\begin{equation}\label{Q_l}
	Q_l = \rho\Theta \Delta^{-1/2} \Delta^{1/2} B_l \Delta^{1/2} \Delta^{-1/2} \Theta^T := UM_lU^T,
\end{equation}
where $U = \Theta \Delta^{-1/2}$ and $M_l = \rho \Delta^{1/2} B_l \Delta^{1/2}$. Here, $U$ is an $n \times K$ orthogonal column matrix with $K$ different rows and serves as the eigenspace of the population matrix $Q_l$. The $K\times K$ symmetric matrix $M_l$ is referred to as the scaled connectivity matrix or the score matrix. We will study its estimation and asymptotic normality in subsequent sections. Note that \cite{arroyo2021inference} first considered this type of decomposition when they developed the COSIE model. 

Through the decomposition in (\ref{Q_l}), the multi-layer SBM can be reparameterized by ($U, M_1, \ldots, M_L$). Note that such parametrization is identifiable up to orthogonal transformation because for any orthogonal matrix $Z \in \mathbb O(K)$, $(UZ) Z^TM_lZ (UZ)^T$ provides another valid decomposition, where $\mathbb O(K)$ represents the set consists of all $K\times K$ orthogonal matrices.

\subsection{Estimation of the scaled connectivity matrices}
In order to estimate the scaled connectivity matrices $M_l (l\in [L])$, we first estimate the common eigenspace $U$. 

To utilize the information across layers, we regard $U$ as the eigenspace of the following $\sum_{l=1}^LQ_l^2$,
\begin{equation*}
	\sum_{l=1}^LQ_l^2 = \sum_{l=1}^L\rho^2\Theta B_l \Theta^T \Theta B_l \Theta^T = U\sum_{l=1}^L\rho^2\Delta^{1/2} B_l \Delta B_l \Delta^{1/2} U^T,
\end{equation*}
where we squared $Q_l$'s before summing them up to avoid the community cancellation by direct summation \citep{LeiJ2022Bias}. Suppose $\sum_{l=1}^LB_l^2$ is of full rank, and recalling that $\Delta$ is a full rank diagonal matrix, then $\sum_{l=1}^L \rho^2\Delta^{1/2} B_l \Delta B_l \Delta^{1/2}$ is also of full rank.  Denote the $\sum_{l=1}^L \rho^2\Delta^{1/2} B_l \Delta B_l \Delta^{1/2}$ by $W \Lambda W^T$, where $W\in \mathbb O(K)$ and $\Lambda$ is a diagonal matrix. Then we have
\begin{equation*}
\sum_{l=1}^LQ_l^2 = UW \Lambda W^TU^T.
\end{equation*}
Hence, $UW$ are the eigenvectors of $\sum_{l=1}^L Q_l^2$.

Therefore, we can estimate $U$ using the eigenvectors of the sample version $\sum_{l=1}^L Q_l^2$. Specifically, to mitigate the bias introduced by the diagonal part of $\sum_{l=1}^LA_l^2$, we conduct eigendecomposition on the following bias-adjusted sum of squares of the adjacency matrices 
$$\sum_{l=1}^L (A_l^2 - D_l),$$ where $D_l$ is an $n\times n$ diagonal matrix with $D_{l,ii}=\sum_{j=1}^n A_{l,ij}$ representing the degree of node $i$ in layer $l$. The resulting eigenvectors are denoted by $\hat{U}$.


Given $\hat{U}$, we are ready to estimate the $M_l$ matrices by recalling the decomposition of the population matrix $Q_l$ in (\ref{Q_l}). 
Specifically, we can obtain the following estimator for each layer $l$:
\begin{equation}\label{M_l}
	\hat{M_l} = \argmin\limits_{M \in \mathbb R^{K\times K}} \| A_l - \hat{U}M\hat{U}^T \| = \hat{U}^T A_l \hat{U}.
\end{equation} 

\begin{remark}
  {Similar estimation approaches for scaled connectivity matrices have been used by \cite{arroyo2021inference}, where the common eigenspace $U$ is derived based on distributed estimation techniques \citep{fan2019distributed}. By contrast, our methodology utilizes the bias-adjusted sum of squares of the adjacency matrices to estimate the eigenspace, aiming to avoid potential signal cancellation. This aggregation method was initially proposed by \cite{LeiJ2022Bias}; however, their primary focus was on the statistical consistency of community detection instead of the asymptotic normality in the estimation of connectivity matrices.}
\end{remark}



\section{Asymptotic properties}\label{asymptotic}
In this section, we present the asymptotic properties of the individual estimates of the scaled connectivity matrices $M_l (l \in [L])$ in the multi-layer SBM.

To establish the asymptotic normality of $M_l$, we need the following assumptions.
\begin{assump}\label{bal}
	The number of communities $K$ is fixed. The community sizes are balanced, that is, there exists some constant $c_1>1$ such that each community size is in $[c_1^{-1}n/K, c_1n/K]$.
\end{assump}

{Assumption \ref{bal} also indicates that there exist positive constants $c_2, c_3$, and an orthogonal matrix $Z \in \mathbb O(K)$, such that $c_2/\sqrt{n} \leq |(UZ)_{is}| \leq c_3/\sqrt{n}$ for all $i \in [n]$ and $s \in [K]$. This is referred to as eigenvector delocalization \citep{rudelson2015Delocalization, he2019local}. Indeed, by recalling the relationship $U=\Theta\Delta^{-1/2}$ and choosing the orthogonal matrix $Z$ with $c_2/\sqrt{K} \leq Z_{st} \leq c_3/\sqrt{K}$ for each $K$ and each pair $1 \leq s, t \leq K$, the delocalization of $U$ follows by Assumption \ref{bal}.}  

\begin{assump}\label{fullrank}
	The minimum eigenvalue of $\sum_{l=1}^LB_l^2$ is at least $c_4L$ for some constant $c_4>0$.
\end{assump}

Assumption \ref{fullrank} specifies the growth rate of the minimum eigenvalue of $\sum_{l=1}^LB_l^2$ in order to make the theoretical bound  of the estimated common eigenspace $\hat U$ around $U$ concrete. The linear growth rate is reasonable because when the connectivity matrices are common, Assumption \ref{fullrank} holds naturally.

\begin{assump} \label{var}
Suppose the sum of the variance of the edges satisfies $$s^2(Q_l):=\sum_{i=1}^n \sum_{j=1}^n Q_{l,ij}(1-Q_{l,ij}) = \omega(1)$$ for all $l\in [L]$.
\end{assump}

Assumption \ref{var} is critical to ensure the Lindeberg conditions for the Central Limit Theorem are met. For balanced community sizes, Assumption \ref{var} simplifies to $\sum_{s=1}^K \sum_{t=1}^K \rho   B_{l,st}(1-\rho B_{l,st}) = \omega(1/n^2)$, which suffices if $\rho= \omega(1/n^2)$. 

Before stating the next assumption, we first define the following $K(K+1)/2 \times K(K+1)/2$ matrix $\Sigma^l$,
\begin{equation}\label{sigma}
	\Sigma_{\frac{2s+t(t-1)}{2},\frac{2s'+t'(t'-1)}{2}}^l := \sum_{i=1}^{n-1}\sum_{j=i+1}^n(U_{is}U_{jt} + U_{js}U_{it} )\;(U_{is'}U_{jt'}+U_{js'}U_{it'})\;Q_{l,ij}(1 - Q_{l,ij})
\end{equation}
for each $1\leq s \leq t \leq K$ and $1\leq s'\leq t' \leq K$. Also, define $G_l = A_l - P_l$ with $P_l = \mathbb{E}(A_l) = Q_l - \text{diag}(Q_l)$ to be the noise matrix. 
As we will see, $\Sigma^l$ actually serves as the covariance matrix of the vectorized $U^TG_lU$, denoted by ${\rm vec}(U^TG_lU)\in \mathbb R^{K(K+1)/2}$, where we focus on the upper triangular part of the symmetric $U^TG_lU$. Specifically, for any $1 \leq s \leq t \leq K$, the $\frac{2s+t(t-1)}{2}$th entry in ${\rm vec}(U^TG_lU)$ corresponds to $(U^TG_lU)_{s, t}$.

\begin{assump} \label{cov eigenvalue}
	$\lambda_{\min}(\Sigma^l) = \omega(1/n^2)$ for all $l\in [L]$, where $\lambda_{\min}(\cdot)$ denotes the smallest eigenvalue. 
\end{assump}

Similar to Assumption \ref{var}, Assumption \ref{cov eigenvalue} contributes to verifying the Lindeberg conditions for the Central Limit Theorem. By Assumption \ref{var} and the discussions after Assumption \ref{bal}, we can conclude that each entry of $\Sigma_l$ is $\omega(1/n^2)$. Hence, Assumption \ref{cov eigenvalue} is stronger than Assumption \ref{var}. 

\begin{remark}
Assumptions \ref{var} and \ref{cov eigenvalue} were first used in \citet{arroyo2021inference}. 
\end{remark}

With these assumptions, we obtain the following entry-wise and vector-wise asymptotic properties of the estimated scaled connectivity matrices under the multi-layer SBM. 

\begin{theorem}\label{clt}
	Suppose Assumptions {\rm \ref{bal}} and {\rm \ref{fullrank}} hold for the multi-layer SBM, and a positive constant $c_5$ exists such that $n\rho \geq c_5\log(L+n)$. Then the estimate $\hat{M_l}$ obtained from {\rm (\ref{M_l})} has the following asymptotic properties. 
			
	{\rm (a)} If Assumption {\rm \ref{var}} holds, then for any given $l\in[L]$ and $s,t\in[K]$, we have
	\begin{equation*}
		\left(\Sigma_{\frac{2s+t(t-1)}{2},\frac{2s+t(t-1)}{2}}^l\right)^{-1/2} \left (Z\hat{M_l}Z^T - M_l - ZE_lZ^T\right)_{s, t} \rightarrow_d \mathcal N (0, 1)
	\end{equation*}
	as $n$ goes to infinity.
	
	{\rm (b)} If Assumption {\rm \ref{cov eigenvalue}} holds, then for any given $l\in[L]$, we have
	\begin{equation*}
		\left(\Sigma^l\right)^{-1/2} {\rm vec}\left (Z\hat{M_l}Z^T - M_l - ZE_lZ^T\right) \rightarrow_d \mathcal N (\bm 0, \bm I)
	\end{equation*}
	as $n$ goes to infinity. Here, $\mathcal N (\bm 0, \bm I)$ is a standard normal distribution in $K(K+1)/2$ dimensions.
 
    In (a) and (b), $Z = \arginf\limits_{Z\in \mathbb O(K)}\|U^T \hat{U} - Z\|_F$ is a $K\times K$ orthogonal matrix, and $E_l$ is a diminishing term satisfying $\|E_l\|_F \leq  c_6\left(\rho + \frac{\log^{1/2}(L+n)}{L^{1/2}} \right)$ with high probability for some positive constant $c_6$.
 
\end{theorem}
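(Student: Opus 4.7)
The plan is to perform a clean algebraic decomposition that isolates a dominant linear Gaussian term from a lower-order remainder, and then apply a Lindeberg-type central limit theorem to the dominant piece. Set $\tilde U := \hat U Z^{T}$, so that $Z\hat M_l Z^{T}=\tilde U^{T}A_l\tilde U$. Writing $\tilde U = U + (\tilde U-U)$ and expanding yields
\[
Z\hat M_l Z^{T} = U^{T}A_l U + (\tilde U - U)^{T}A_l U + U^{T}A_l (\tilde U - U) + (\tilde U - U)^{T}A_l (\tilde U - U).
\]
Since $A_l=Q_l-\mathrm{diag}(Q_l)+G_l$ and $U^{T}Q_lU=M_l$ by (\ref{Q_l}), the first block becomes $U^{T}A_l U = M_l - U^{T}\mathrm{diag}(Q_l)U + U^{T}G_l U$. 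Therefore
\[
Z\hat M_l Z^{T}-M_l - U^{T}G_l U = -U^{T}\mathrm{diag}(Q_l)U + (\tilde U-U)^{T}A_l U + U^{T}A_l(\tilde U-U) + (\tilde U-U)^{T}A_l(\tilde U-U),
\]
which is precisely the definition of $Z E_l Z^{T}$.

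The next step is to show $\|E_l\|_F = O\!\left(\rho + \log^{1/2}(L+n)/L^{1/2}\right)$ with high probability. For the diagonal piece, $|Q_{l,ii}|\le\rho$ together with $\|U\|_F^2=K$ gives $\|U^{T}\mathrm{diag}(Q_l)U\|_F\lesssim\rho$. For the perturbation terms, I would bound $\|(\tilde U-U)^{T}A_l U\|_F\leq\|A_l\|_2\|\tilde U-U\|_F$, and analogously for the other two pieces. A sin-$\Theta$ argument applied to the eigenspaces of $\sum_{l=1}^L(A_l^2-D_l)$ and $\sum_{l=1}^LQ_l^2$ then supplies $\|\tilde U-U\|_F$. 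Here Assumptions~\ref{bal} and~\ref{fullrank} combined with $\lambda_{\min}(\Delta)\asymp n$ yield an eigengap $\lambda_K\!\left(\sum_l Q_l^2\right)\gtrsim L\rho^2 n^2$, while a concentration bound in the spirit of \cite{LeiJ2022Bias} gives $\|\sum_l(A_l^2-D_l)-\sum_l Q_l^2\|_2\lesssim n\rho\sqrt{L\log(L+n)}$ under $n\rho\gtrsim\log(L+n)$. Together these produce $\|\tilde U-U\|_F\lesssim \sqrt{\log(L+n)/L}/(n\rho)$, and combined with the standard bound $\|A_l\|_2\lesssim n\rho$ w.h.p., each linear cross term is of order $\sqrt{\log(L+n)/L}$; the quadratic term is of smaller order.

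To finish, one establishes asymptotic normality of the dominant term $U^{T}G_l U$. Since the diagonal of $G_l$ vanishes and its off-diagonal entries with $i<j$ are independent,
\[
(U^{T}G_l U)_{s,t}=\sum_{i<j}(U_{is}U_{jt}+U_{js}U_{it})\,G_{l,ij},
\]
whose variance matches $\Sigma^l_{\frac{2s+t(t-1)}{2},\frac{2s+t(t-1)}{2}}$ and whose full covariance structure matches $\Sigma^l$. For part~(a), apply the Lindeberg CLT: the delocalization $|U_{is}|\lesssim 1/\sqrt n$ noted after Assumption~\ref{bal} makes each coefficient $O(1/n)$, so the maximum summand variance is $O(1/n^2)=o(\Sigma^l_{s,s})$ by Assumption~\ref{var}. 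For part~(b), use the Cram\'er--Wold device: for any unit $c\in\mathbb R^{K(K+1)/2}$ the linear combination $c^{T}(\Sigma^l)^{-1/2}\mathrm{vec}(U^{T}G_l U)$ is a sum of independent bounded variables with unit variance, and the scalar Lindeberg condition reduces to $1/(n^2\lambda_{\min}(\Sigma^l))=o(1)$, which is exactly Assumption~\ref{cov eigenvalue}.

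The main obstacle will be the perturbation step: one must establish $\|\tilde U-U\|_F$ at a rate sharp enough to match the claimed remainder, while respecting the fact that $\hat U$ is built from $\sum_{l=1}^L(A_l^2-D_l)$ and therefore depends on $A_l$ itself. This precludes a naive independence argument between $\tilde U-U$ and $G_l$, but is resolved by absorbing all $A_l$-dependent fluctuations into $E_l$ via high-probability spectral bounds (aggregated across layers by a single union bound), so that the asymptotically Gaussian term $U^{T}G_l U$ remains a deterministic linear functional of the independent Bernoulli entries of $G_l$.
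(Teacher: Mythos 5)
Your decomposition, your identification of $U^TG_lU$ as the asymptotically Gaussian term, and your Lindeberg/Cram\'er--Wold argument for parts (a) and (b) all match the paper's proof. The gap is in how you bound the remainder $E_l$. You propose to control the cross terms via $\|(\tilde U-U)^{T}A_l U\|_F\leq\|A_l\|_2\|\tilde U-U\|_F$, and you claim the eigenspace rate $\|\tilde U-U\|_F\lesssim \log^{1/2}(L+n)/(L^{1/2}n\rho)$. That rate is too optimistic: the dominant perturbation of $\sum_l(A_l^2-D_l)$ around $\sum_lQ_l^2$ is the cross term $\sum_l(P_lG_l+G_lP_l)$, whose spectral norm concentrates at order $L^{1/2}(n\rho)^{3/2}\log^{1/2}(L+n)$ (not $L^{1/2}n\rho\log^{1/2}(L+n)$ as your plan implies), so Davis--Kahan against the gap $L\rho^2n^2$ yields $\|\hat U-UZ\|_F\lesssim \frac{1}{n}+\frac{\log^{1/2}(L+n)}{L^{1/2}(n\rho)^{1/2}}$, which is the paper's Lemma \ref{eigenspace bound}. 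Plugging this correct rate into your bound $\|A_l\|_2\|\tilde U-U\|_F$ with $\|A_l\|_2\asymp n\rho$ gives $\rho+\frac{(n\rho)^{1/2}\log^{1/2}(L+n)}{L^{1/2}}$, which exceeds the theorem's claimed bias bound by a factor of $(n\rho)^{1/2}$. So as written your argument either rests on an incorrect perturbation bound or fails to recover the stated rate for $\|E_l\|_F$.

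The missing idea is to split $A_l=P_l+G_l$ \emph{before} expanding in $\tilde U-U$, so that the signal and noise enter the remainder through different quantities. For the signal part, $\hat U^TQ_l\hat U=(U^T\hat U)^TM_l(U^T\hat U)$, and the deviation is governed by $U^T\hat U-Z$, which is \emph{second order} in the eigenspace error ($\|U^T\hat U-Z\|_F\lesssim\|\hat U-UZ\|_F^2$ for the optimal orthogonal alignment $Z$); this tames the large factor $\|M_l\|_2\asymp n\rho$ and yields a contribution of order $\rho/n+\log(L+n)/L$. For the noise part one does pay a first-order factor $\|\hat U-UZ\|_F$, but it is multiplied by $\|G_l\|_2\lesssim(n\rho)^{1/2}$ rather than $\|A_l\|_2\asymp n\rho$, giving exactly $\log^{1/2}(L+n)/L^{1/2}$. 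Combining these with the $\sqrt{K}\rho$ diagonal contribution recovers the claimed bound. The CLT portion of your plan needs no change.
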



\begin{remark}
By Assumption \ref{var} and the discussions after Assumption \ref{bal}, we have $\Sigma_{\frac{2s+t(t-1)}{2},\frac{2s+t(t-1)}{2}}^l \\ = \frac{s^{2}(Q_l)}{n^2} (1 + o(1))$. Thus the smaller edge variance $s^{2}(Q_l)$ would lead to a more efficient estimate $(\hat{M}_l)_{st}$, as long as the edge variance satisfies Assumption \ref{var}. 
\end{remark}

\begin{remark}
The bias term  $\|E_l\|_F$ is negligible because it is dominated by $\|M_l\|_F\asymp n\rho$. This is also numerically verified in Section \ref{simulations}. 
\end{remark}

\begin{remark}
Compared with \citet{arroyo2021inference}, the merit of Theorem \ref{clt} lies in that we derive the non-asymptotic error bound for the bias term while their bound holds in expected value. In addition, we relax the full rank assumption of each connectivity matrix therein. 
\end{remark}

\section{Extension to multi-layer degree-corrected SBMs}\label{MDCSBM}
In this section, we extend the proposed method and the corresponding asymptotic normality results to multi-layer degree-corrected stochastic block models, which is a counterpart of the multi-layer SBM but in each layer the network is assumed to be generated from a degree-corrected SBM (DCSBM) \citep{karrer2011stochastic}.


The SBM can not capture the degree heterogeneity inherent in the networks. To address this, the DCSBM extends the standard SBM by incorporating node specific parameters, allowing degrees to vary within the same community. 

We now introduce the multi-layer DCSBM, where the layer-wise networks are generated from the DCSBM. Without specification, the notes and notation are the same as those in Section \ref{MSBM}. Define $\psi \in \mathbb{R}^n_+$  to be the degree heterogeneity parameter which measures the propensity of a node in forming edges with other nodes and is consensus among layers. For  identifiability, we assume $\max_{i\in C_k}\psi_i =1$ for all $k\in[K]$ \citep{lei2015consistency, zhang2022randomized}.
Given $\psi$, the community assignments $g_i\in[K]$, the connectivity matrix $B_l (l \in [L])$, and the network sparsity parameter $\rho$, the element $A_{l,ij}(i<j)$ is generated independently as follows
\begin{equation*}\label{adj_generate_dc}
	A_{l,ij}\sim {\rm Bernoulli}(\rho \psi_i\psi_j B_{l,g_ig_j}),
\end{equation*}
and $A_{l,ij}=A_{l,ji}$ and $A_{l,ii}=0$. The population adjacency matrix of $A_l$ is then $$Q_l=\rho {\rm diag}(\psi) \Theta B_l  \Theta^T  {\rm diag}(\psi).$$ To facilitate further analysis, we now give some additional notation. For each $k\in [K]$, define $G_k =\{1 \leq i \leq n : g_i = k\}$ as the set of nodes whose community membership is $k$. Let $\phi_k$ be an $n \times 1$ vector that matches $\psi$ on the index set $G_k$ and is zero elsewhere. Define $\Psi = {\rm diag}(\|\phi_1\|_2^2, \ldots, \|\phi_K\|_2^2)$, where each diagonal element $\|\phi_k\|_2^2$ represents the effective community size of community $k$. This allows us to reformat $Q_l$ as
\begin{equation*}
	Q_l = \rho {\rm diag}(\psi) \Theta \Psi^{-1/2} \Psi^{1/2} B_l  \Theta^T \Psi^{1/2} \Psi^{-1/2}  {\rm diag}(\psi) := UM_lU^T,
\end{equation*}
where $U = {\rm diag}(\psi) \Theta \Psi^{-1/2}$ and $M_l = \rho\Psi^{1/2} B_l \Psi^{1/2}$. It can be shown that $U^TU = I$.


As described in the case of multi-layer SBMs, we regard $U$ as the eigenspace of the following $\sum_{l=1}^LQ_l^2$,
\begin{equation*}
	\sum_{l=1}^LQ_l^2 = \sum_{l=1}^L\rho^2{\rm diag}(\psi) \Theta B_l  \Theta^T  {\rm diag}^2(\psi) \Theta B_l  \Theta^T  {\rm diag}(\psi)= U\sum_{l=1}^L\rho^2\Psi^{1/2} B_l \Psi B_l \Psi^{1/2} U^T.
\end{equation*}
Suppose $\sum_{l=1}^LB_l^2$ is of full rank, then $\sum_{l=1}^L\rho^2\Psi^{1/2} B_l \Psi B_l \Psi^{1/2}$ is of full rank as well. Similar to the case of multi-layer SBMs, we perform the eigendecomposition on the bias-adjusted sum of squares of the adjacency matrices $\sum_{l=1}^L (A_l^2 - D_l)$, where recall $D_l$ is an $n\times n$ diagonal matrix with $D_{l,ii}=\sum_{j=1}^n A_{l,ij}$ and the resulting eigenvectors are denoted by $\hat{U}$. We further obtain $\hat{M_l}$ with the help of $\hat{U}$ through \eqref{M_l}.

The following Assumptions \ref{bal_dc}, \ref{var_dc}, and \ref{cov eigenvalue dc} are needed for establishing the asymptotic normality of $\hat{M}_l$, which are counterparts of the assumptions under the multi-layer SBM.  


\renewcommand{\theassump}{E1}
\begin{assump}\label{bal_dc}
	The number of communities $K$ is fixed and $K\|\phi_k\|_2 \asymp \|\psi\|_2$ for all $k \in [K]$. 
\end{assump}
\renewcommand{\theassump}{\arabic{assump}}

\renewcommand{\theassump}{E3}
\begin{assump}\label{var_dc}
	$\min\limits_{l \in [L]}\sum_{i=1}^{n}\sum_{j=1}^nQ_{l,ij}(1 - Q_{l,ij}) = \omega(n^2/\|\psi\|_2^4)$.
\end{assump}
\renewcommand{\theassump}{\arabic{assump}}

\renewcommand{\theassump}{E4}
\begin{assump} \label{cov eigenvalue dc}
	$\lambda_{\min}(\Sigma^l) = \omega(1/\|\psi\|^4_2)$ for all $l\in [L]$, where $\lambda_{\min}(\cdot)$ denotes the smallest eigenvalue. 
\end{assump}
\renewcommand{\theassump}{\arabic{assump}}

Assumption \ref{bal_dc} requires that the node propensity parameters restricted to each community have the same order of Euclidean norm. This condition is frequently imposed in the analysis of the DCSBM; see \cite{su2020strong, jin2022optimal} and \cite{agterberg2022joint}. 
Generally, Assumptions \ref{var_dc} and \ref{cov eigenvalue dc} are more stringent than Assumptions \ref{var} and \ref{cov eigenvalue} because $\|\psi\|_2 \lesssim \sqrt{n}$; while in the special case of multi-layer SBMs where $\psi=\mathbf{1}$ and $\|\psi\|_2 \asymp \sqrt{n}$, 
Assumptions \ref{var_dc} and \ref{cov eigenvalue dc} reduce to Assumptions \ref{var} and \ref{cov eigenvalue}, respectively. With these assumptions, the entry-wise and vector-wise asymptotic normality hold for the estimated scaled connectivity matrix $\hat{M}_l$ under the multi-layer DCSBM.

\begin{theorem}\label{clt_dc}
Suppose Assumptions  {\rm \ref{bal_dc}} and {\rm \ref{fullrank}} hold for the multi-layer DCSBM, and a positive constant $c_7$ exists such that $n\rho \geq c_7\log(L+n)$. Consider the estimate $\hat{M_l}$ obtained from {\rm (\ref{M_l})}, which has the following asymptotic properties. 
		
	{\rm (a)} If Assumption {\rm \ref{var_dc}} holds, then for any given $l\in[L]$ and $s,t\in[K]$, we have
	\begin{equation*}
		\left(\Sigma_{\frac{2s+t(t-1)}{2},\frac{2s+t(t-1)}{2}}^l\right)^{-1/2} \left (Z\hat{M_l}Z^T - M_l - ZE_lZ^T\right)_{s, t} \rightarrow_d \mathcal N (0, 1)
	\end{equation*}
	as $n$ goes to infinity. 
	
	{\rm (b)} If Assumption {\rm \ref{cov eigenvalue dc}} holds, then for any given $l\in[L]$, we have
	\begin{equation*}
		\left(\Sigma^l\right)^{-1/2} {\rm vec}\left (Z\hat{M_l}Z^T - M_l - ZE_lZ^T\right) \rightarrow_d \mathcal N (\bm 0, \bm I)
	\end{equation*}
	as $n$ goes to infinity. Here, $\mathcal N (\bm 0, \bm I)$ is a standard normal distribution in $K(K+1)/2$ dimensions.
 
In (a) and (b), $Z = \arginf\limits_{Z\in \mathbb O(K)}\|U^T \hat{U} - Z\|_F$ is a $K\times K$ orthogonal matrix, and $E_l$ is a bias term satisfying $\|E_l\|_F \leq c_8\left(\frac{n^2\log^{1/2}(L+n)}{L^{1/2}\|\psi\|_2^4} + \max\{\frac{n^{3/2}\rho^{1/2}}{\|\psi\|_2^4}, \rho\} \right)$ with high probability for some positive constant $c_8$.
\end{theorem}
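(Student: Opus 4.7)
The plan is to mirror the strategy of Theorem \ref{clt}, rederiving the eigenvector and concentration ingredients to accommodate the degree heterogeneity $\psi$. Setting $R := \hat U Z^T - U$ so that $\hat U = (U+R)Z$, the Procrustes alignment $Z$ makes $\|R\|_F$ small. A direct expansion gives
\begin{equation*}
Z\hat M_l Z^T = (U+R)^T A_l (U+R) = U^T A_l U + U^T A_l R + R^T A_l U + R^T A_l R,
\end{equation*}
and substituting $U^T A_l U = M_l - U^T {\rm diag}(Q_l) U + U^T G_l U$ (where $G_l = A_l - P_l$) identifies
\begin{equation*}
Z E_l Z^T := -\,U^T {\rm diag}(Q_l) U + U^T A_l R + R^T A_l U + R^T A_l R,
\end{equation*}
so that $Z\hat M_l Z^T - M_l - Z E_l Z^T$ equals the purely stochastic $U^T G_l U$ exactly. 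The theorem thus splits into (i) a central limit theorem for $U^T G_l U$ and (ii) a high-probability $\|E_l\|_F$ bound.

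For step (i), the $(s,t)$ entry of $U^T G_l U$ equals $\sum_{i<j} G_{l,ij}(U_{is}U_{jt}+U_{it}U_{js})$, an independent sum of bounded centered variables whose variance matches the diagonal entry of $\Sigma^l$ defined in (\ref{sigma}), and whose full covariance after vectorization equals $\Sigma^l$. Assumption \ref{bal_dc} yields the delocalization $|U_{ik}| = \psi_i\mathbb{I}(g_i=k)/\|\phi_k\|_2 \lesssim 1/\|\psi\|_2$ (using $\psi_i \leq 1$ and $K$ fixed), so each summand is $O(1/\|\psi\|_2^2)$. The ratio of maximum summand squared to total variance is then at most of order $\|\psi\|_2^{-4}/\Sigma^l_{\cdot,\cdot}$, which is $o(1)$ under Assumption \ref{var_dc} entry-wise and under Assumption \ref{cov eigenvalue dc} vector-wise. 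The univariate and multivariate Lindeberg--Feller CLTs then deliver statements (a) and (b) respectively.

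For step (ii), I control $\|R\|_F$ via the Davis--Kahan $\sin\Theta$ theorem. Assumptions \ref{bal_dc} and \ref{fullrank}, combined with the sandwich inequality $B_l \Psi B_l \succeq \lambda_{\min}(\Psi) B_l^2$, deliver $\lambda_K(\sum_l Q_l^2) \gtrsim L\rho^2\|\psi\|_2^4$. The perturbation $\|\sum_l (A_l^2 - D_l - Q_l^2)\|_2$ is controlled by matrix-Bernstein arguments in the spirit of \cite{LeiJ2022Bias}, where the condition $n\rho \geq c_7\log(L+n)$ supplies the concentration. Propagating the resulting $\|R\|_F$ bound through the pieces of $E_l$: the diagonal correction satisfies $\|U^T{\rm diag}(Q_l)U\|_F \lesssim \rho$; the cross term obeys $\|U^T A_l R\|_F \leq \|A_l\|_2 \|R\|_F$; and the quadratic term $\|R^T A_l R\|_F \leq \|A_l\|_2 \|R\|_F^2$ is of lower order. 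Summing, while decomposing $\|A_l\|_2 \leq \|P_l\|_2 + \|G_l\|_2$, yields the announced bound on $\|E_l\|_F$.

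The main obstacle is the sharp accounting of the cross term $\|U^T A_l R\|_F$, which dictates the dominant rate in the bias. Because $\|A_l\|_2$ is governed either by the signal $\|P_l\|_2 \asymp \rho\|\psi\|_2^2$ or by the noise $\|G_l\|_2 \lesssim \sqrt{n\rho\log n}$ depending on the density regime, one must carry both cases through the decomposition; this dichotomy is precisely what produces the $\max\{n^{3/2}\rho^{1/2}/\|\psi\|_2^4, \rho\}$ term in the stated bound. A secondary subtlety, absent from Theorem \ref{clt}, is that $U$ now carries the node-specific weights $\psi_i$, so the block-constant cancellations available in the plain SBM analysis are lost and Assumption \ref{bal_dc} must be invoked throughout to recover the analogous delocalization and eigenvalue lower bounds.
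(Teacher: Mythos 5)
Your overall architecture --- the exact decomposition $Z\hat M_lZ^T - M_l - ZE_lZ^T = U^TG_lU$, a Lindeberg--Feller argument for the independent sum $\sum_{i<j}G_{l,ij}(U_{is}U_{jt}+U_{it}U_{js})$ using the delocalization $|U_{ik}|\lesssim 1/\|\psi\|_2$ from Assumption \ref{bal_dc}, and Davis--Kahan with the eigenvalue floor $\lambda_K(\sum_lQ_l^2)\gtrsim L\rho^2\|\psi\|_2^4$ --- is exactly the paper's. The CLT part and the eigenvector bound are fine as written (the paper is no more careful than you are about whether Assumption \ref{var_dc} truly controls the \emph{weighted} variance $\Sigma^l$, so no complaint there).

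The gap is in the bias bound. You control the cross term by $\|U^TA_lR\|_F\le\|A_l\|_2\|R\|_F$, which is first order in $\|R\|_F$ and carries the full signal norm $\|P_l\|_2$ (of order $\rho\|\psi\|_2^2$, or $n\rho$ crudely). With $\|R\|_F\asymp n/\|\psi\|_2^4 + n^{3/2}\log^{1/2}(L+n)/(L^{1/2}\rho^{1/2}\|\psi\|_2^4)$, this produces contributions of order $n\rho/\|\psi\|_2^2$ and $n^{3/2}\rho^{1/2}\log^{1/2}(L+n)/(L^{1/2}\|\psi\|_2^2)$, which exceed the claimed $\max\{n^{3/2}\rho^{1/2}/\|\psi\|_2^4,\,\rho\}$ and $n^2\log^{1/2}(L+n)/(L^{1/2}\|\psi\|_2^4)$ by a factor of order $\rho^{1/2}\|\psi\|_2^2/n^{1/2}$, which can diverge (e.g.\ $\rho$ constant and $\|\psi\|_2^2= n^{0.9}$). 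The missing idea is that for the Procrustes-optimal $Z$ one has $\|U^T\hat U - Z\|_F\le\|\hat U - UZ\|_F^2$, so $U^TR=(U^T\hat U-Z)Z^T$ is \emph{quadratically} small; after splitting $A_l=P_l+G_l$, the signal part of your cross term reduces to $M_l(U^TR)$ plus a diagonal correction and therefore enters only at second order in the eigenvector error, while only the noise part $U^TG_lR$ is genuinely first order --- and there $\|G_l\|_2\lesssim\sqrt{n\rho}$ is small enough. This is precisely how the paper arranges $E_l$ (its terms pair $U^T\hat U - Z$ with $M_l$, and pair $\hat U - UZ$ only with $G_l$). Without this observation your attribution of the $\max\{\cdot,\cdot\}$ term to a signal-versus-noise dichotomy in $\|A_l\|_2$ is incorrect, and the stated rate for $\|E_l\|_F$ is not recovered.
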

\begin{remark}{
$\|\psi\|_2$ reveals the degree of heterogeneity to some extent. A larger $\|\psi\|_2$ would lead to a smaller bias $\|E_l\|_F$. When $\|\psi\|_2 = \omega(n^{1/4})$, the upper bound of $\|E_l\|_F$ does not exceed $c_9\left( n^{1/2}\rho^{1/2} + \frac{n}{L^{1/2}} \right)$ up to log factors, where $c_9$ is some positive constant. In this case, when $\rho L^{1/2}=\omega(1)$, the bias term $\|E_l\|_F$ is dominated by $\|M_l\|_F\asymp n\rho$. }

\end{remark}

\begin{remark}
The discrepancy between $U$ and $\hat U$, which we establish in Lemma \ref{eigenspace bound dc}, is important for Theorem \ref{clt_dc}. It is of independent interest in evaluating the misclassification performance of the spectral clustering-based algorithm under the multi-layer DCSBM.  
\end{remark}

\section{Simulations} \label{simulations}
In this section, we conduct simulations to investigate the finite sample performance of the proposed method. In Section \ref{Bias}, we verify the vanishing behavior of the bias term $E_l$ in the formulation of asymptotic normality. In Sections \ref{IE} and \ref{GraphTest}, we test the efficacy of the derived asymptotic normality in two downstream statistical inference tasks, one for the interval estimation of the entries of the scaled connectivity matrices and the other for hypothesis testing to infer whether the multi-layer network exhibits homogeneity.

The proposed method is denoted by SCCE, namely, the abbreviation for \textbf{s}pectral \textbf{c}lustering-based method for \textbf{c}onnectivity matrix \textbf{e}stimation. 
Since studies on the asymptotic properties under multi-layer SBMs are relatively limited, we compare SCCE with the Multiple Adjacency Spectral Embedding (MASE) method studied in \cite{arroyo2021inference} and \cite{zheng2022limit}. 

\subsection{Bias evaluation} \label{Bias}
As shown in Theorems \ref{clt} and \ref{clt_dc}, $\hat M_l$ exhibits asymptotic normality with a  bias term $E_l$ for all $l \in [L]$. The theoretical results indicate that for a fixed number of nodes $n$, the bias tends to zero in Frobenius norm as the number of layers $L$ increases and the edge density $\rho$ decreases. The exact formulation for $E_l$ can be found in the proofs and \eqref{El_detail}.

In this experiment, we numerically verify the vanishing behavior of the bias $E_l$. To this end, we generate the following multi-layer SBM. Consider $n$ nodes per network with $K = 3$ communities proportional to the number of nodes via $(0.4,0.3,0.3)$. We set $B_l = \rho B^{(1)}$ for  $l\in \{1, \ldots, L/2\}$ and $B_l = \rho B^{(2)}$ for $l\in \{L/2 + 1, \ldots, L\}$, with 

{
 \begin{equation*}
		B^{(1)} = U\begin{bmatrix} 1.5 & 0 & 0 \\ 0 & 0.2 & 0 \\ 0 & 0 & 0.5 \end{bmatrix}U^T \approx \begin{bmatrix}0.675 & 0.175 & 0.46 \\ 0.175 & 0.675 & 0.46 \\ 0.46 & 0.46 & 0.85 \end{bmatrix}
	\end{equation*} 
	and
	\begin{equation*}
		B^{(2)} = U\begin{bmatrix} 1.5 & 0 & 0 \\ 0 & 0.2 & 0 \\ 0 & 0 & -0.5 \end{bmatrix}U^T \approx \begin{bmatrix} 0.175 & 0.675 & 0.46 \\ 0.675 & 0.175 & 0.46 \\ 0.46 & 0.46 & 0.85 \end{bmatrix},
	\end{equation*} 
	where 
	\begin{equation}\label{U}
		U = \begin{bmatrix}
			1/2 & 1/2 & -\sqrt{2}/2 \\ 1/2 & 1/2 & \sqrt{2}/2 \\ \sqrt{2}/2 & -\sqrt{2}/2 & 0
		\end{bmatrix}. 	
	\end{equation}
 }
 
We test how the bias $\sum_{l=1}^L \|E_l\|_F/L$ varies against the number of layers $L$. The average results over 100 replications are displayed in Figure \ref{bias_fig}. Specifically, in Figure \ref{bias_fig:a}, the number of nodes is fixed to be 500 and we test the method for various fixed $\rho\in \{0.3, 0.2, 0.1, 0.05\}$. The bias decreases as $L$ increases and $\rho$ decreases. In particular, when $\rho$ is large, our upper bound on the bias indicates that $\rho$ is the dominating term and large $L$ will not decrease the bias. The numerical result with $\rho=0.3$ coincides with this theoretical finding. In Figure \ref{bias_fig:b}, the overall edge density $\rho$ is fixed to be 0.1 and we test the results for various fixed $n\in\{200, 300, 400, 500\}$. The results are also consistent with the theoretical bound, showing that the bias decreases as the number of layers $L$ or the number of nodes $n$ increases.


\begin{figure*}[!htbp]
    \centering
    \begin{subfigure}{0.48\textwidth}
    	\includegraphics[width=\textwidth]{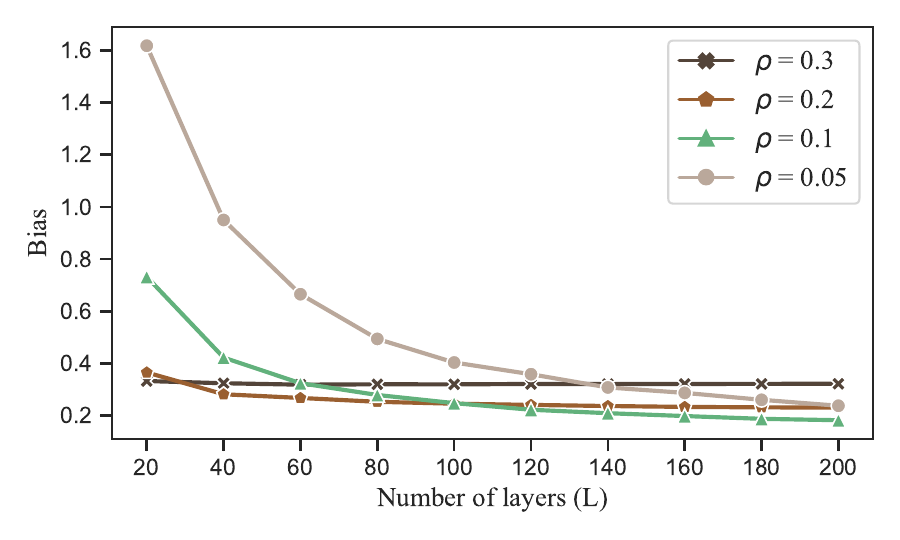}
        \vspace{-0.14\textwidth}
        \caption{}
        \label{bias_fig:a}
    \end{subfigure}
    \hspace{0.01\textwidth}
    \begin{subfigure}{0.48\textwidth}
        \includegraphics[width=\textwidth]{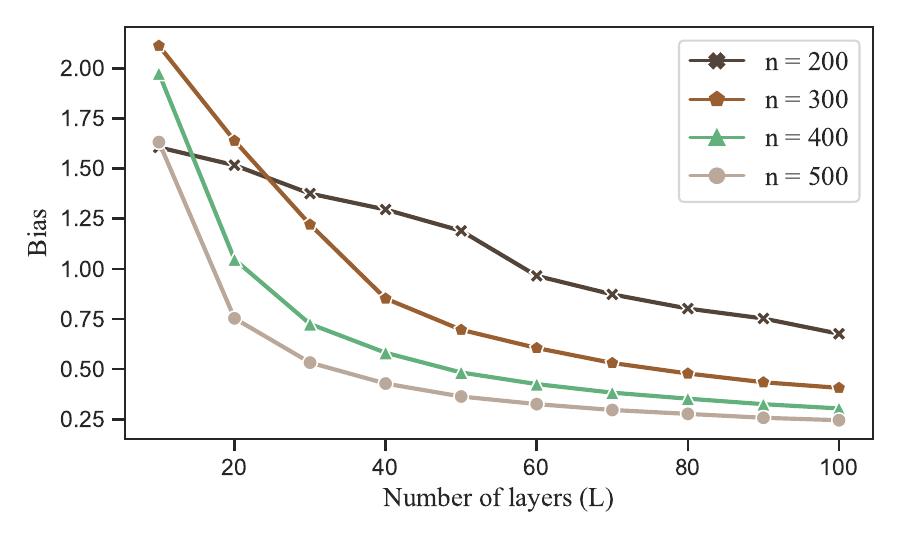}
        \vspace{-0.14\textwidth}
        \caption{}
        \label{bias_fig:b}
    \end{subfigure}
    \caption{The Frobenius norm of the bias in Theorem \ref{clt}, averaged over $L$ layers. (a) The number of nodes $n = 500$. (b) The overall edge density $\rho = 0.1$.}
    \label{bias_fig} 
\end{figure*}	

\subsection{Interval estimation} \label{IE}
The asymptotic distribution of $\hat{M}_{l,ij}$ helps to establish the interval estimate of $M_{l,st}$ for $1 \leq l \leq L$ and $1\leq s, t \leq K$. 
In particular, the interval estimate for $M_{l,st}$, as indicated by Theorems \ref{clt} and \ref{clt_dc}, is given by $$(Z\hat{M}_lZ^T)_{s, t} \pm 
u_{\alpha/2}\left(\hat{\Sigma}_{\frac{2s+t(t-1)}{2},\frac{2s+t(t-1)}{2}}^l\right)^{1/2},$$ where $u_{\alpha/2}$ is the $\alpha/2$-th percentile of the standard normal distribution, and $\hat{\Sigma}$ represents the estimated covariance matrix, which is the counterpart of \eqref{sigma} with $U$ and $Q_l$'s replaced by $\hat{U}$ and $\hat{A}_l$'s, respectively. 

In this experiment, we test the accuracy of the estimated interval using the probability of the estimated interval covering the true underlying parameter ${M}_{l,ij}$ over 200 replications. We consider two model set-ups, one for the multi-layer SBM and the other for the multi-layer DCSBM. 
In the multi-layer SBM, the network generation process is similar to that described in Section \ref{Bias}. We consider various combinations of $n\in \{100,200,300,400,500\}$ and $\rho\in \{0.3,0.2,0.1,0.05\}$. For each combination of $(n,\rho)$, we consider two different numbers of layers, $L$, set at 50 and 100, respectively. In the multi-layer DCSBM, the network generation process is also similar to that in Section \ref{Bias} with the number of nodes $n\in\{300,400,500\}$ except that we introduce the degree heterogeneity parameter $\psi$. For each $\psi_i$, we independently generate $\tilde\psi_i$ from the uniform distribution $U(2, 3)$ and set $\psi_i = \beta_n \tilde \psi_i / \|\tilde \psi\|_2$  \citep{jin2022optimal}. Here we set the corresponding $\beta_n$ to 10.4, 12 and 13.4 when the number of nodes $n$ is 300, 400 and 500, respectively. 

Tables \ref{cp_sbm} and \ref{cp_dcsbm} show the average rate of the 95\% confidence intervals covering the true parameters over 200 replications for the multi-layer SBM and multi-layer DCSBM, respectively. In particular, we calculate the average coverage rate over all the triples $(s,t,L)$. The results show that as $\rho$, $n$, and $L$ increase, the coverage rate under both multi-layer SBMs and multi-layer DCSBMs improves. The proposed method consistently outperforms the MASE over all range of the considered parameter settings. In addition, we observe that the 95\% confidence intervals do not achieve a 95\% coverage rate, which can be attributed to the presence of a bias term or to an inadequate number of nodes and network layers. Nevertheless, as the number of layers and nodes increases, the coverage rate becomes close to 95\%.


\begin{table}[!htbp] 
	\caption{The average coverage rate of the 95\% confidence intervals under multi-layer SBMs.} 
	\centering
	\label{cp_sbm}
	\begin{tabular}{cccccccccc}
		\toprule
		\multirow{2}*{} & \multicolumn{1}{c}{} & \multicolumn{2}{c}{$\rho = 0.3$} & \multicolumn{2}{c}{$\rho = 0.2$} & \multicolumn{2}{c}{$\rho = 0.1$} & \multicolumn{2}{c}{$\rho = 0.05$} \\
		\cmidrule(lr){3-4}\cmidrule(lr){5-6}\cmidrule(lr){7-8}\cmidrule(lr){9-10}
		${n}$ & $L$ & SCCE & MASE & SCCE & MASE & SCCE & MASE & SCCE & MASE \\
		\midrule
		\multirow{2}*{100} & 50 & 0.860 & 0.764 & 0.752 & 0.696 & 0.787 & 0.697 & 0.817 & 0.743\\
		& 100 & 0.906 & 0.864 & 0.846 & 0.728 & 0.811 & 0.732 & 0.854 & 0.753\\
		\midrule
		\multirow{2}*{200} & 50 & 0.920 & 0.876 & 0.902 & 0.782 & 0.705 & 0.655 & 0.726 & 0.530\\
		& 100 & 0.924 & 0.906 & 0.919 & 0.872 & 0.865 & 0.715 & 0.755 & 0.598\\
		\midrule
		\multirow{2}*{300} & 50 & 0.924 & 0.895 & 0.920 & 0.799 & 0.831 & 0.618 & 0.588 & 0.550\\
		& 100 & 0.928 & 0.917 & 0.929 & 0.885 & 0.905 & 0.756 & 0.724 & 0.636\\
		\midrule
		\multirow{2}*{400} & 50 & 0.925 & 0.917 & 0.928 & 0.832 & 0.903 & 0.456 & 0.630 & 0.467\\
		& 100 & 0.927 & 0.923 & 0.928 & 0.894 & 0.922 & 0.658 & 0.857 & 0.490\\
		\midrule
		\multirow{2}*{500} & 50 & 0.929 & 0.926 & 0.927 & 0.854 & 0.911 & 0.404 & 0.752 & 0.413\\
		& 100 & 0.929 & 0.929 & 0.927 & 0.903 & 0.925 & 0.602 & 0.886 & 0.419\\
		\bottomrule
	\end{tabular}
\end{table}

\begin{table}[!htbp] 
	\caption{The average coverage rate of the 95\% confidence intervals under multi-layer DCSBMs.} 
	\centering
	\label{cp_dcsbm}
	\begin{tabular}{cccccccccc}
		\toprule
		\multirow{2}*{} & \multicolumn{1}{c}{} & \multicolumn{2}{c}{$\rho = 0.3$} & \multicolumn{2}{c}{$\rho = 0.2$} & \multicolumn{2}{c}{$\rho = 0.1$} & \multicolumn{2}{c}{$\rho = 0.05$} \\
		\cmidrule(lr){3-4}\cmidrule(lr){5-6}\cmidrule(lr){7-8}\cmidrule(lr){9-10}
		n & L & SCCE & MASE & SCCE & MASE & SCCE & MASE & SCCE & MASE \\
		\midrule
		\multirow{2}*{300} & 50 & 0.883 & 0.555 & 0.732 & 0.537 & 0.621 & 0.438 & 0.721 & 0.581\\
		& 100 & 0.915 & 0.692 & 0.881 & 0.579 & 0.664 & 0.434 & 0.758 & 0.577\\
		\midrule
		\multirow{2}*{400} & 50 & 0.905 & 0.480 & 0.843 & 0.450 & 0.522 & 0.363 & 0.660 & 0.463\\
		& 100 & 0.924 & 0.697 & 0.905 & 0.531 & 0.700 & 0.428 & 0.679 & 0.457\\
		\midrule
		\multirow{2}*{500} & 50 & 0.916 & 0.441 & 0.880 & 0.365 & 0.480 & 0.351 & 0.561 & 0.402\\
		& 100 & 0.925 & 0.634 & 0.914 & 0.434 & 0.796 & 0.433 & 0.583 & 0.392\\
		\bottomrule
	\end{tabular}
\end{table}

\subsection{Hypothesis testing} \label{GraphTest}
In multi-layer networks, an interesting statistical inference task is to test whether there is homogeneity across different layers, specifically whether some layer-wise adjacency matrices come from the same population. In the multi-layer SBM, recall the model can be reparameterized by ($U, M_1, \ldots, M_L$), as described in (\ref{Q_l}), which implies that for any pair $(k,l)$ with $k\neq l$ and $k,l\in \{1,...,L\}$, the populations are identical, namely $Q_k=Q_l$, if and only if $M_k=M_l$. Consequently, this type of homogeneity manifests as a partition of the scaled connectivity matrices $M_l$, say $$ M_{i_1} = \cdots = M_{i_r}; \quad M_{i_{r+1}} = \cdots =  M_{i_{r+j}}; \quad \cdots. $$ Here $i_r$ is the index in $\{1, \ldots, L\}$. All pairs within a set of the partition are equal, and two $M_l$'s in different partitions are unequal. To infer the homogeneity of the multi-layer SBM, we consider the simultaneous testing of the $\binom{L}{2}$ hypotheses 
\begin{equation}\label{mt}
	H_{kl, 0}: M_k = M_{l}, \quad\quad 1 \leq k < l \leq L,
\end{equation} 
and use a Holm type step-down procedure to control the family-wise error rate \citep{lehmann2005testing}. 
The totality of acceptance and rejection statements resulting from the multiple comparison procedure may lead to a partition of the connectivity matrices. In the Holm procedure, null hypotheses are considered successively, from most significant to least significant, with further tests depending on the outcome of earlier ones. If any hypothesis is rejected at the level $\alpha' = \alpha/\binom{L}{2}$, the denominator of $\alpha'$ for the next test is $\binom{L}{2} - 1$ and the criterion continues to be modified in a stage-wise manner, with the denominator of $\alpha'$ reduced by 1 each time a hypothesis is rejected, so that tests can be conducted at successively higher significance levels. This type of multiple comparison procedure is commonly used \citep{dudoit2008multiple, noble2009does}.

The primary challenge lies in how to test the $\binom{L}{2}$ individual hypotheses, which can be facilitated using the asymptotic distribution of the estimated scaled connectivity matrices. Specifically, to test each individual hypothesis $H_{kl, 0}$ at the specified significance level, we employ the Frobenius norm of the difference between the estimated scaled connectivity matrices $T_{kl} :=\|\hat{M}_k - \hat{M}_l\|_F$ as the test statistic. The distribution of the test statistic can be determined with the help of asymptotic distribution of ${\rm vec}(\hat{M}_k - \hat{M}_l)$.

We first specify the distribution of ${\rm vec}(\hat{M}_k - \hat{M}_l)$. In Theorem \ref{clt}, we have provided the asymptotic distribution of ${\rm vec}(Z\hat{M_l}Z^T - M_l)$, with the vanishing bias term excluded. However, the dependence between $\hat{M}_k$ and $\hat{M}_{l}$ prevents a straightforward summation to obtain the asymptotic distribution of ${\rm vec}(Z\hat{M_k}Z^T - M_k - Z\hat{M_l}Z^T + M_l)$, which, under the  hypothesis $H_{kl, 0}$, reduces to the distribution of ${\rm vec}(Z(\hat{M}_k - \hat{M}_l)Z^T)$. Fortunately, by leveraging the analogous technique used in the proof of Theorem \ref{clt}, specifically the decomposition (\ref{Transform}), we can obtain the limiting distribution of ${\rm vec}(Z(\hat{M}_k- \hat{M}_l)Z^T - M_k + M_l)$. Specifically, we have
\begin{equation*}
	\left(\Sigma^k + \Sigma^l\right)^{-1/2} {\rm vec}\left(Z(\hat{M}_k- \hat{M}_l)Z^T - M_k + M_l - Z(E_k- E_l)Z^T \right) \rightarrow_d \mathcal N (\bm 0, \bm I)
\end{equation*}
for each distinct pair $k$ and $l$ satisfying $1 \leq k\neq l \leq L$, where $E_k$ and $E_l$ are negligible under certain conditions and the notation is the same as those in Theorem \ref{clt}. The conclusion holds for both the multi-layer SBM and the multi-layer DCSBM. As a result, under the hypothesis $H_{kl, 0}$ and omitting the bias terms, we can approximate the asymptotic distribution of ${\rm vec}(Z(\hat{M}_k - \hat{M}_l)Z^T)$ by $\mathcal N (\bm 0, \Sigma^k + \Sigma^l)$. With the help of asymptotic distribution of ${\rm vec}(Z(\hat{M}_k - \hat{M}_l)Z^T)$, we estimate the distribution of the test statistic $T_{kl} = \|\hat{M}_k - \hat{M}_l\|_F = \|Z(\hat{M}_k - \hat{M}_l)Z^T\|_F$ by drawing samples from $\mathcal N (\bm 0, \hat{\Sigma}^k + \hat{\Sigma}^l)$ and calculating the empirical distribution of the Frobenius norm, where $\hat{\Sigma}^k$ and $\hat{\Sigma}^l$ denote the estimated covariance matrices defined in Section \ref{IE}. The above procedure to determine the distribution of the test statistic is denoted by SCCE. 

\paragraph{\textbf{Experiment 1.}} The distribution of the test statistic $T_{kl}$ enables the testing of the individual hypothesis $H_{kl, 0}$. In this experiment, we evaluate the power of the test statistic $T_{kl}$ in testing $H_{kl, 0}$ for a given pair $k \neq l$. 
Both the multi-layer SBM and the multi-layer DCSBM are considered. We fix $L = 50$, $n=300$ and $K=3$. The number of nodes in three communities is assigned according to $(0.4,0.3,0.3)$. The overall edge density $\rho$ is fixed at 0.2. The network generation processes of both models are similar with that in Section \ref{IE} except the definitions of $B_l$'s. In this experiment, we set $B_l = \rho B^{(1)}$ for $l\in \{3, \ldots, L/2 + 1\}$, and $B_l = \rho B^{(2)}$ for $l\in \{L/2 + 2, \ldots, L\}$, where $B^{(1)}$ and $B^{(2)}$ are defined in Section \ref{Bias}. For the first layer $B_1 = \rho B^{(1)}$, and for the second layer $B_2 = \rho B^{(2')}$, the $B^{(2')}$ is defined to be the same as $B^{(1)}$ except for the first entry $B^{(2')}_{11}$. We vary $B^{(2')}_{11}$ to obtain a sequence of $B_2$ matrices. Under these parameter settings, we test the individual hypothesis $H_{12, 0}: M_1 = M_{2}$ at a specified significance level. 

\begin{figure*}[!htbp]
    \centering
    \begin{subfigure}{0.48\textwidth}
    	\includegraphics[width=\textwidth]{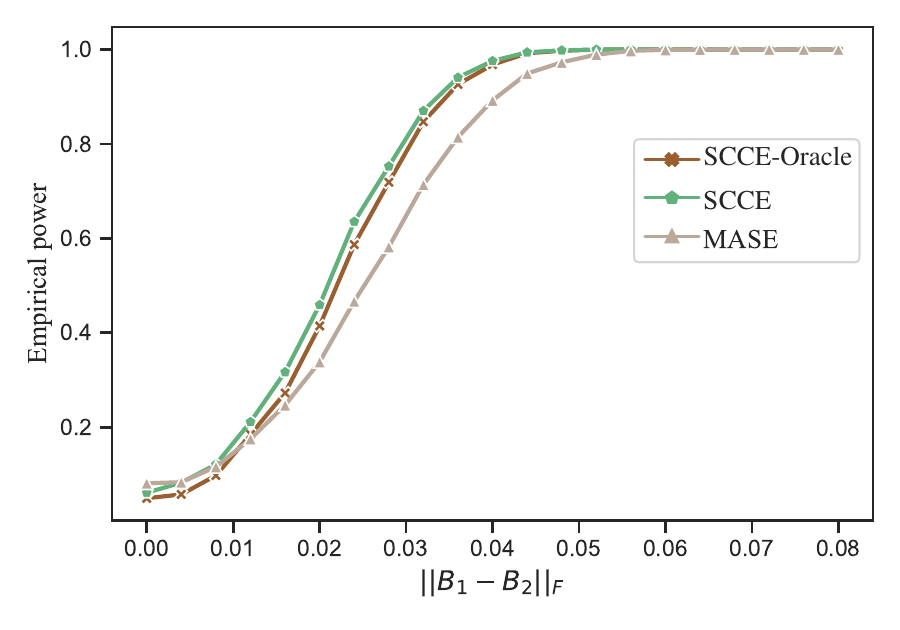}
        \vspace{-0.14\textwidth}
        \caption{multi-layer SBM}
    \end{subfigure}
    \hspace{0.01\textwidth}
    \begin{subfigure}{0.48\textwidth}
        \includegraphics[width=\textwidth]{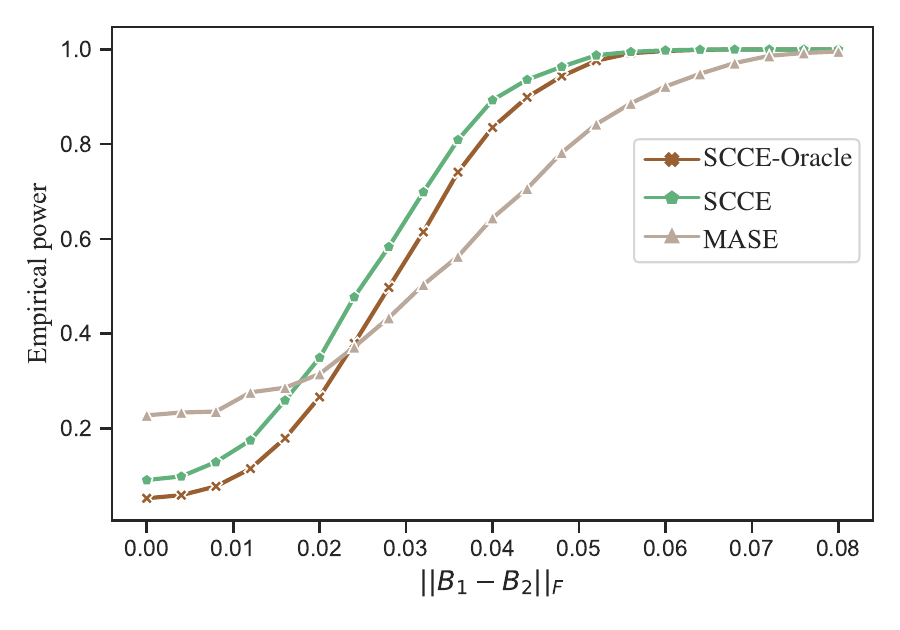}
        \vspace{-0.14\textwidth}
        \caption{multi-layer DCSBM}
    \end{subfigure}
    \caption{Empirical power to reject the hypothesis that the two different network layers have an identical population matrix at a 0.05 significance level under (a) multi-layer SBMs and (b) multi-layer DCSBMs. Empirical power is measured as the proportion of trials for which  the test identified a significant result.}
    \label{test_fig}
\end{figure*}

Figure \ref{test_fig} shows the averaged empirical power to reject the individual hypothesis $H_{12, 0}$ at a 0.05 significance level with $\|B_1-B_2\|_F$ increasing over 100 replications. 
In our comparisons, we compare SCCE with a counterpart SCCE-Oracle, which differs from SCCE primarily in that it does not use the asymptotic distribution of ${\rm vec}(Z(\hat{M}_k - \hat{M}_l)Z^T)$. Instead, SCCE-Oracle generates various samples from the population to obtain various $\hat{M}_k-\hat{M}_l$ and uses the empirical distribution to approximate the real distribution of the test statistic $T_{kl}$.  Another comparison method, MASE, is defined similarly with SCCE except that we use the estimator $\hat{M}_l$ and the asymptotic distribution derived in \citet{arroyo2021inference}. 
The results in Figure \ref{test_fig} show that the proposed method SCCE has power close to that of SCCE-Oracle, demonstrating the accuracy of the asymptotic distributions. In addition, the proposed method SCCE has higher power than MASE under both network models, showing the efficacy of the proposed method in hypothesis testing. 

\paragraph{\textbf{Experiment 2.}} We shall now consider the simultaneous testing of the $\binom{L}{2}$ hypotheses \eqref{mt} by means of a Holm type step-down procedure for inferring homogeneity of multi-layer networks. For this purpose, we consider the multi-layer SBM with two distinct connectivity matrices, where all connectivity matrices constitute a partition as follows: $$ M_1 = M_2 = \cdots = M_{L/2}; \quad\quad M_{L/2+1} =  M_{L/2+2} = \cdots = M_L.$$ All pairs within a set of the partition are equal, and two $M_l$'s in different partitions are unequal. It is clear that the overall network is heterogeneous, while the network layers within each partition exhibit homogeneity.
Specifically, we set $B_l = \rho B^{(1)}$ for $l\in \{1, \ldots, L/2\}$ and $B_l = \rho B^{(2)}$ for $l\in \{L/2 + 1, \ldots, L\}$, with 
 	\begin{equation*}
		B^{(1)} = U\begin{bmatrix} 1 & 0 & 0 \\ 0 & 0.4 & 0 \\ 0 & 0 & 0.1 \end{bmatrix}U^T \approx \begin{bmatrix}0.4 & 0.3 & 0.212 \\ 0.3 & 0.4 & 0.212 \\ 0.212 & 0.212 & 0.7 \end{bmatrix}
	\end{equation*} 
	and
	\begin{equation*}
		B^{(2)} = U\begin{bmatrix} 1 & 0 & 0 \\ 0 & 0.4 & 0 \\ 0 & 0 & -0.1 \end{bmatrix}U^T \approx \begin{bmatrix} 0.3 & 0.4 & 0.212 \\ 0.4 & 0.3 & 0.212 \\ 0.212 & 0.212 & 0.7  \end{bmatrix},
	\end{equation*} 
	where $U$ is defined as in \eqref{U}. We set $L = 20$, $n=500$ and $K=3$, with the number of nodes in three communities proportioned at $(0.4,0.3,0.3)$. The overall edge density $\rho$ is fixed at 0.2. Recall the definition of $M_l$, the difference in $M_l$ between different partitions is now given by $\rho\|B^{(1)} - B^{(2)}\|_F = 0.04$. 
	
Figure \ref{mtp_simulation} presents the outcomes of the multiple comparisons conducted using a Holm type step-down procedure, where olive green signifies acceptance and white signifies rejection of the individual hypotheses. We control the Holm procedure to ensure that the family-wise error rate is no bigger than $\alpha = 0.05$. In the results of the proposed method SCCE, two distinct blocks are observed within which all individual hypotheses are simultaneously accepted, demonstrating the presence of homogeneity within these blocks. This is consistent with our experimental setup. However, MASE cannot accurately infer the true homogeneity. Moreover, in cases where the true hypothesis is false, the proposed method SCCE almost always rejects them.

\begin{figure*}[!htbp]
    \centering
    \begin{subfigure}{0.4\textwidth}
        \includegraphics[width=\textwidth]{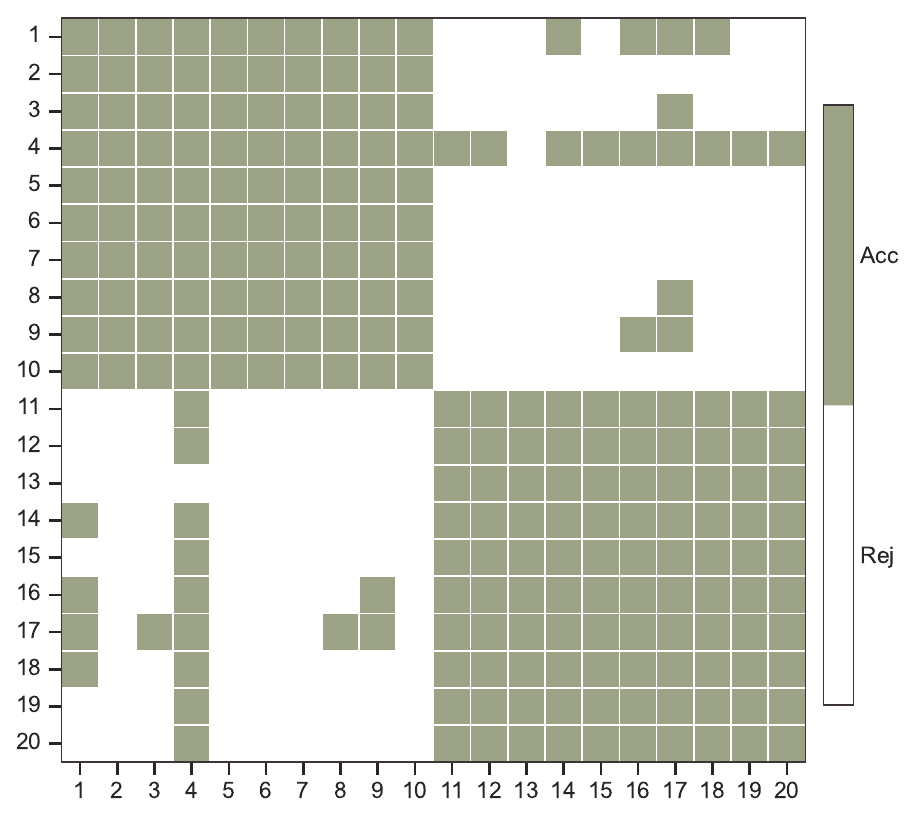}
        \vspace{-0.12\textwidth}
        \caption{SCCE}
    \end{subfigure}
    \hspace{0.06 \textwidth}
    \begin{subfigure}{0.4\textwidth}
    	\includegraphics[width=\textwidth]{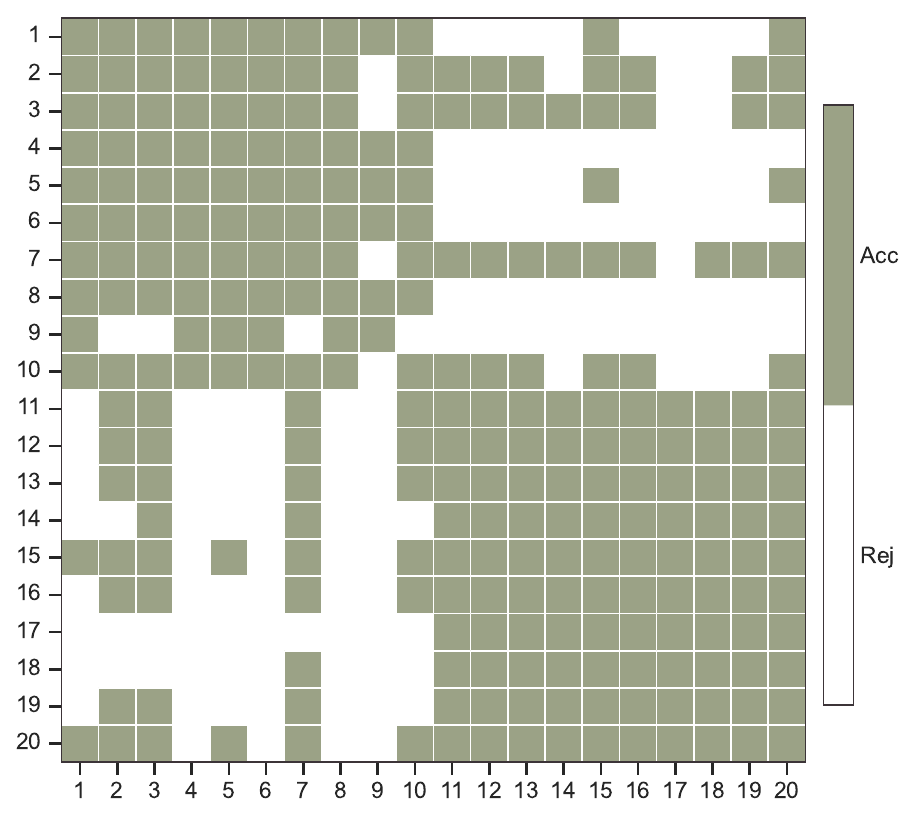}
        \vspace{-0.12\textwidth}
        \caption{MASE}
    \end{subfigure}
    \caption{Results of the multiple comparisons conducted using a Holm type step-down procedure, where olive green signifies the acceptance of the individual hypotheses.}
    \label{mtp_simulation}
\end{figure*}

\section{Real data analysis}\label{Real data analysis}
In this section, we use the proposed method SCCE to measure the homogeneity across all layers of a real multi-layer network, specifically the WFAT dataset \citep{de2015structural}. For this purpose, we simultaneously test the $\binom{L}{2}$ hypotheses $$H_{kl, 0}: M_k = M_{l}, \quad\quad 1 \leq k < l \leq L,$$ using a Holm type step-down procedure. Next, we provide the data description and the results for the real dataset.

The WFAT dataset, sourced from the Food and Agriculture Organization (FAO) of the United Nations, contains annual trade records for over 400 food and agricultural products across all countries globally. Our analysis focuses specifically on 2020 trade data for cereals, a category defined by the FAO and detailed in Table \ref{items}. We constructed a multi-layer network using this data. Each layer represents a different type of cereal, with nodes representing countries and edges within each layer representing the trade relationships with respect a specific cereal. In particular, a trade link between countries is established if the trade value for a cereal exceeds $\$2000$. To ensure network connectivity, we excluded countries with a total degree over 37 layers less than 23. This guarantees that each node is linked to at least one other node in at least half of the layers. The resulting network consists of 37 layers, each containing 114 common nodes, denoted by $A_l, l \in \{1,\ldots,37\}$. The five continents, including America, Africa, Asia, Europe, and Oceania, suggest the choice $K=5$ \citep{noroozi2022sparse}.

Figure \ref{testing for real data} shows the results of the multiple comparisons conducted using a Holm type step-down procedure, where olive green indicates the acceptance and white denotes the rejection of the individual hypotheses. We control the Holm procedure to ensure that the family-wise error rate is no greater than $\alpha = 0.05$. Additionally, the $p$-value matrix for individual hypothesis tests across all network layers is shown. In Figure \ref{mtp_holm}, several distinct blocks are observed where all individual hypotheses within each block are simultaneously accepted, demonstrating the presence of homogeneity within the multi-layer network.
For instance, layers 5 to 14, corresponding to `Germ of maize', `Triticale', `Cereals n.e.c.', `Millet', `Sorghum', `Bran of maize', `Bran of cereals n.e.c.', `Flour of mixed grain', `Flour of rice', and `Rye', exhibit no rejections of the individual hypotheses in any pairwise comparisons. This outcome suggests a significant similarity in trade patterns among these cereal products. Notably, most of these cereals are consumed primarily by Asia, which might lead to simultaneous acceptance of the individual hypotheses. This observation suggests the need for an integrated analysis of the trade patterns of these cereals.

\begin{figure*}[!htbp]
    \centering
    \begin{subfigure}{0.48\textwidth}
        \includegraphics[width=\textwidth]{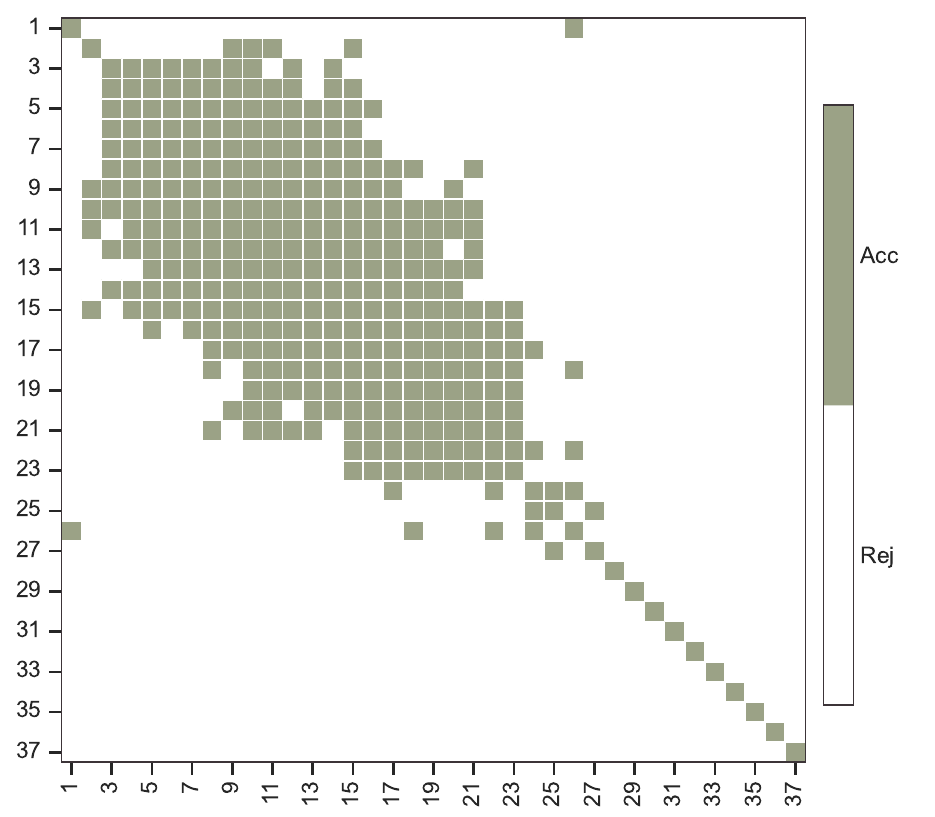}
        \vspace{-0.12\textwidth}
        \caption{multiple comparisons}
        \label{mtp_holm}
    \end{subfigure}
    \hspace{0.01 \textwidth}
    \begin{subfigure}{0.48\textwidth}
    	\includegraphics[width=\textwidth]{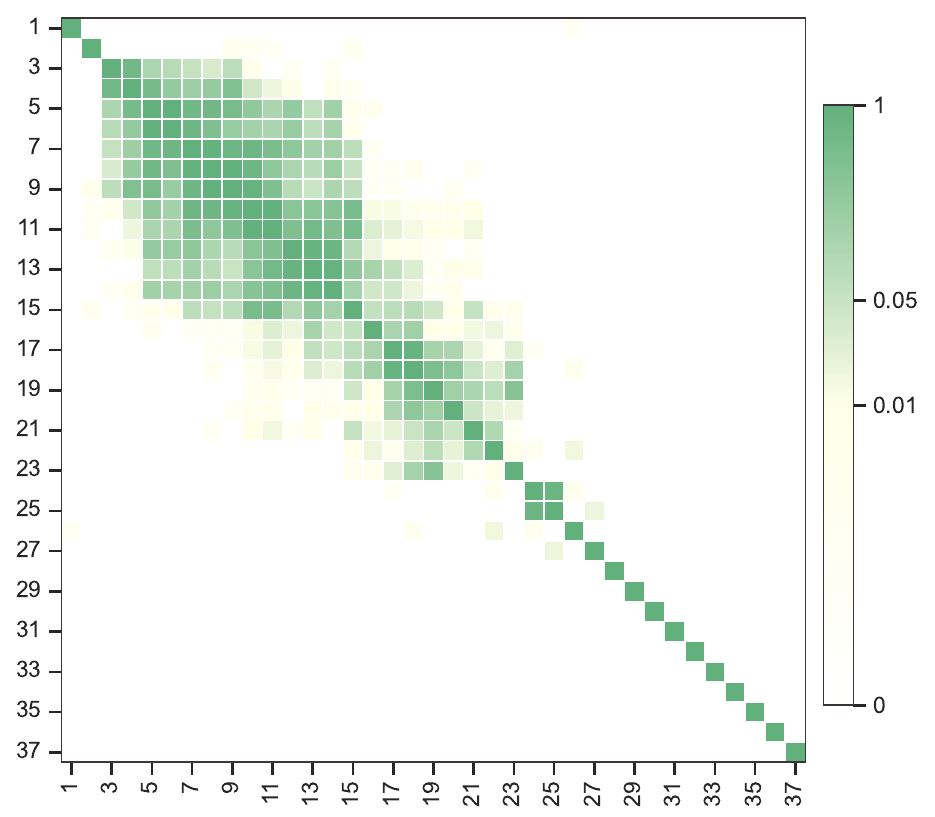}
        \vspace{-0.12\textwidth}
        \caption{$p$-values matrix}
        \label{testing for real data trade}
    \end{subfigure}
    \caption{(a) Results of the multiple comparisons conducted using a Holm type step-down procedure for the WFAT data,  with olive green indicating acceptance of the individual hypotheses. (b) Matrix of $p$-values for the individual hypothesis tests corresponding to all pairs of network layers, where the layer $l$ correspond to the cereal numbered $l$ in Table \ref{items}.}
    \label{testing for real data}
\end{figure*}	


\section{Conclusion}\label{conclusion}
In this paper, we focused on the asymptotic normality in the estimation of the scaled connectivity matrix under multi-layer SBMs and multi-layer DCSBMs. To this end, we first leveraged the leading eigenvectors of the bias-adjusted sum of squares of the adjacency matrices to estimate the common eigenspaces across layers \citep{LeiJ2022Bias}. Building on this, we further derived efficient estimates for the layer-wise scaled connectivity matrices. These estimates are shown to satisfy asymptotic normality under mild additional assumptions. To corroborate the theoretical results, we conducted a series of simulations to apply the derived asymptotic normality to statistical inference tasks, including interval estimation and hypothesis testing. Finally, we applied the method to a real-world network dataset to infer the population homogeneity within the multi-layer network.

There are many ways to extend the content of this paper. First, we assume the connectivity matrices are different among layers. However, in practice, the network layers are often grouped such that connectivity matrices are the same within each group \citep{jing2021community,fan2022asymptotic}. It is thus meaningful to make use of all the network layers within each group to estimate the connectivity matrices, which would lead to improved estimation accuracy and a weakened requirement for network sparsity. Second, we focused on the undirected multi-layer networks. It is of great importance to extend the method and theory to multi-layer directed networks \citep{zheng2022limit, su2023}. Third, we assumed the number of communities is known and fixed. In the future, it is important to study the estimation of the number of communities under multi-layer SBMs. 

\appendix
\def\theequation{A\arabic{section}.\arabic{equation}}
\def\thesection{A\arabic{section}}
\def\thesubsection{A.\arabic{subsection}} 
\def\thetheorem{A\arabic{theorem}}
\def\thelemma{A\arabic{lemma}}
\def\thefigure{A\arabic{figure}}
\def\thetable{A\arabic{table}}

\section*{Appendix} \label{appendix}
\renewcommand{\theequation}{A\arabic{equation}}
\setcounter{equation}{0}

Appendix \ref{Technical lemmas} provides the technical lemmas that are needed to prove the central limit theorems. Appendix \ref{MainProofs} includes the proof of main theorems in the main text. Appendix \ref{AuxiliaryLemmas} contains auxiliary lemmas. Appendix \ref{additional real data} provides the additional results for real data analysis.

\subsection{Technical lemmas} \label{Technical lemmas}
\begin{lemma} \label{eigenspace bound}
Suppose the multi-layer SBM satisfies Assumptions \rm{\ref{bal}} and \rm{\ref{fullrank}}. Let $\hat{U}$ denotes the leading eigenvectors of $\sum_{l=1}^L (A_l^2 - D_l)$. If $n\rho \geq c_1\log(L+n)$ for some positive constant $c_1$, then 
	 \begin{eqnarray*}
		\|\hat{U}-UZ\|_F \leq c_2 \left(\frac{1}{n} + \frac{\log^{1/2}(L+n)}{L^{1/2}\rho^{1/2}n^{1/2}} \right)
	\end{eqnarray*}
	with probability at least $1-O((L+n)^{-1})$ for some constant $c_2 > 0$. 
\end{lemma}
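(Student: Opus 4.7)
The plan is to apply a Davis--Kahan $\sin\Theta$ perturbation bound to $\hat M := \sum_{l=1}^L (A_l^2 - D_l)$ and its population analogue $M := \sum_{l=1}^L Q_l^2$. Since $Q_l = U M_l U^T$, we have $M = U\bigl(\sum_l M_l^2\bigr) U^T$, which is exactly rank $K$, so with $K$ fixed the Frobenius-norm $\sin\Theta$ inequality gives
\begin{equation*}
\|\hat U - UZ\|_F \lesssim \frac{\|\hat M - M\|_2}{\lambda_K(M)}.
\end{equation*}
Writing $M_l = \rho \Delta^{1/2} B_l \Delta^{1/2}$ gives $\sum_l M_l^2 = \rho^2 \Delta^{1/2}\bigl(\sum_l B_l \Delta B_l\bigr)\Delta^{1/2}$; combining Assumption~\ref{bal} (so $\Delta \succeq n/(c_1K)\,I_K$) with Assumption~\ref{fullrank} (so $\sum_l B_l^2 \succeq c_4L\,I_K$) delivers the key lower bound $\lambda_K(M) \gtrsim L\rho^2 n^2$.

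Next I decompose the numerator. With $P_l := \mathbb{E} A_l = Q_l - \mathrm{diag}(Q_l)$ and the centered noise $G_l := A_l - P_l$, the identity $A_l^2 - D_l = P_l^2 + (P_l G_l + G_l P_l) + (G_l^2 - D_l)$ yields
\begin{equation*}
\hat M - M = \sum_{l=1}^L (P_l^2 - Q_l^2) + \sum_{l=1}^L (P_l G_l + G_l P_l) + \sum_{l=1}^L (G_l^2 - D_l).
\end{equation*}
The first term is deterministic with operator norm $\lesssim L\rho^2 n$ (using $\|\mathrm{diag}(Q_l)\|_2\leq\rho$ and $\|Q_l\|_2\lesssim\rho n$); after division by $\lambda_K(M)$ it contributes the $1/n$ summand of the target. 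The second term is a sum of $L$ independent mean-zero symmetric matrices; a matrix Bernstein bound, using the standard high-probability estimate $\|G_l\|_2 \lesssim \sqrt{n\rho\log(L+n)}$ (valid under the given sparsity) and the variance proxy $\|\mathbb{E}[G_l^2]\|_2 \lesssim n\rho$, yields operator norm $\lesssim L^{1/2}\rho^{3/2}n^{3/2}\log^{1/2}(L+n)$ with probability $1 - O((L+n)^{-1})$, which after division by $\lambda_K(M)$ produces the second target summand.

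The main obstacle is the bias-adjusted quadratic noise $\sum_l(G_l^2 - D_l)$: a naive $\|G_l^2\|_2\lesssim n\rho\log(L+n)$ would sum to $Ln\rho\log(L+n)$, swamping the signal $L\rho^2 n^2$ unless $\rho\gtrsim 1$. The subtraction of $D_l$ is introduced precisely to cancel the $\Theta(n\rho)$ bias in $\mathbb{E}[G_l^2]_{ii}=\sum_j\mathrm{Var}(A_{l,ij})$, rendering $\mathbb{E}[G_l^2 - D_l]$ negligible, and the operator-norm concentration of the resulting centered remainder is controlled by the bias-adjusted matrix Bernstein argument of \cite{LeiJ2022Bias}, which decouples diagonal from off-diagonal contributions and applies a Bernstein-type inequality entry-wise before reassembling. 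Under $n\rho\geq c_1\log(L+n)$, that argument yields an operator-norm bound on this term of the same order as or smaller than the cross-term $L^{1/2}\rho^{3/2}n^{3/2}\log^{1/2}(L+n)$, so combining all three pieces and dividing by $\lambda_K(M) \gtrsim L\rho^2 n^2$ gives
\begin{equation*}
\|\hat U - UZ\|_F \lesssim \frac{L\rho^2 n}{L\rho^2 n^2} + \frac{L^{1/2}\rho^{3/2}n^{3/2}\log^{1/2}(L+n)}{L\rho^2 n^2} \asymp \frac{1}{n} + \frac{\log^{1/2}(L+n)}{L^{1/2}\rho^{1/2}n^{1/2}},
\end{equation*}
with the overall $O((L+n)^{-1})$ failure probability obtained by a union bound over the Bernstein tails for the two random pieces.
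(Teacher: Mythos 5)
Your proposal is correct and follows essentially the same route as the paper: the identical signal/noise decomposition (your three pieces correspond to the paper's $N_1$, $N_2$, and $N_3+N_4$), the same lower bound $\lambda_K(\sum_l Q_l^2)\gtrsim L\rho^2n^2$ from Assumptions 1--2, the same appeal to the bias-adjusted concentration result of Lei and Lin for $\sum_l(G_l^2-D_l)$, and the same Davis--Kahan conclusion. The only cosmetic difference is that the paper bounds the diagonal remainder $\mathrm{diag}(\sum_l G_l^2)-\sum_l D_l$ separately (by $Ln\rho^2$, which also folds into the $1/n$ summand), whereas you treat $\sum_l(G_l^2-D_l)$ as a single block.
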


\begin{proof} The bias-adjusted sum-of-squares of the adjacency matrices $\sum_{l=1}^L (A_l^2 - D_l)$ can be decomposed into a signal term and for noise terms
\begin{equation*}
	\sum_{l=1}^L\left(A_l^2 - D_l\right) = \sum_{l=1}^LQ_l^2 + N_1 + N_2 + N_3 + N_4, 
\end{equation*}
where $N_1 = \mbox{diag}^2(Q_l) - Q_l\mbox{diag}(Q_l) - \mbox{diag}(Q_l)Q_l$, $N_2 = \sum_{l=1}^L\left(P_lG_l^T + G_lP_l^T\right)$, $N_3 = \sum_{l=1}^LG_l^2 - \mbox{diag}(\sum_{l=1}^LG_l^2)$, and $N_4 = \mbox{diag}(\sum_{l=1}^LG_l^2) - \sum_{l=1}^LD_l$. Recall that in the context of multi-layer SBMs, $Q_l=\rho \Theta B_l  \Theta^T$, $P_l = Q_l - \mbox{diag}(Q_l)$ and $G_l = A_l - P_l$. 

According to Theorem 1 of \cite{LeiJ2022Bias}, the minimum eigenvalue of the signal term $\sum_{l=1}^LQ_l^2$ is lower bounded by $cL\rho^2n^2$ for some positive constant $c$. Note that we use $c$ to represent the generic constant and it may be different from line to line. We proceed to establish upper bounds for the spectral norms of all noise terms. The first noise term satisfies $\|N_1\|_2 \leq cLn\rho^2.$ Regarding $N_2$, under the condition $n\rho \geq c_1 \log(L+n)$, we have $\|N_2\|_2 \leq cL^{1/2}n^{3/2}\rho^{3/2}\log^{1/2}(n+L)$ with high probability. For $N_3$, by Theorem 4 of \cite{LeiJ2022Bias}, under the condition $n\rho \geq c_1 \log(L+n)$, we have $\|N_3\|_2 \leq cL^{1/2}n\rho \log(L+n)$ with  probability at least $1-O((L+n)^{-1})$. Finally, for $N_4$, given $(\sum_{l=1}^LG_l^2)_{ii} \leq Ln\rho^2 + D_{l,ii}$ for all $i \in [m]$, it follows that $\|N_4\|_2 \leq Ln\rho^2$.

Integrating the bounds of all terms, we have with high probability,
\begin{equation*}
	\frac{\|N_1 + N_2 + N_3 + N_4\|_2}{\lambda_K (\sum_{l=1}^LQ_l^2)} \leq c \frac{Ln\rho^2 + L^{1/2}n^{3/2}\rho^{3/2}\log^{1/2}(L+n)}{L\rho^2n^2}. 
\end{equation*}
Let $\widehat{U}$ be the $n\times K$ matrices that contain the leading eigenvectors of $\sum_{l=1}^L\left(A_l^2 - D_l\right)$. In accordance with Proposition 2.2 of \cite{vu2013minimax} and the Davis-Kahan sin$\Theta$ theorem (Theorem VII.3.1 of \cite{Bhatia1997matrix}), there exists a $K \times K$ orthogonal matrix $Z$ such that
\begin{equation*}
	\|\widehat{U} - UZ\|_F  \leq  \sqrt{K}\|\widehat{U} - UZ\|_2  \leq c \left(\frac{1}{n} + \frac{\log^{1/2}(L+n)}{L^{1/2}\rho^{1/2}n^{1/2}}\right).
\end{equation*} 
The proof is completed.
\end{proof}

\begin{lemma}\label{eigenspace bound dc}
Suppose the multi-layer DCSBM satisfies Assumptions  {\rm \ref{bal_dc}} and {\rm \ref{fullrank}}. Let $\hat{U}$ denotes the leading eigenvectors of $\sum_{l=1}^L (A_l^2 - D_l)$. If $n\rho \geq c_1\log(L+n)$ for some positive constant $c_1$, then
	\begin{equation*}
		\|\hat U - UZ\|_F \leq c_2 \left(\frac{n}{\|\psi\|_2^4} + \frac{n^{3/2}\log^{1/2}(L+n)}{L^{1/2}\rho^{1/2}\|\psi\|_2^4}\right)
	\end{equation*}
with probability at least $1-O((L+n)^{-1})$ for some positive constant $c_2$.
\end{lemma}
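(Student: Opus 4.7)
The plan is to mirror the proof of Lemma \ref{eigenspace bound}, substituting a DCSBM-specific lower bound on the signal and re-examining the noise terms to accommodate the degree-correction factor $\psi$. First I would decompose
\[
\sum_{l=1}^L (A_l^2 - D_l) = \sum_{l=1}^L Q_l^2 + N_1 + N_2 + N_3 + N_4,
\]
where $N_1,\ldots,N_4$ take the same form as in the SBM proof but with $Q_l = \rho\,\mathrm{diag}(\psi)\Theta B_l \Theta^T \mathrm{diag}(\psi)$, $P_l = Q_l - \mathrm{diag}(Q_l)$, and $G_l = A_l - P_l$. The goal is then to bound $\|N_1+N_2+N_3+N_4\|_2$ relative to $\lambda_K(\sum_l Q_l^2)$ and invoke the Davis--Kahan $\sin\Theta$ theorem.

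For the signal, using the identity $\sum_l Q_l^2 = U\bigl(\sum_l \rho^2 \Psi^{1/2} B_l \Psi B_l \Psi^{1/2}\bigr) U^T$ from the derivation preceding the statement, orthonormality of $U$ reduces the minimum eigenvalue to that of $\sum_l \rho^2 \Psi^{1/2} B_l \Psi B_l \Psi^{1/2}$, which is at least $\rho^2 \lambda_{\min}(\Psi)^2 \lambda_{\min}(\sum_l B_l^2)$. Assumption \ref{bal_dc} together with fixed $K$ yields $\|\phi_k\|_2^2 \asymp \|\psi\|_2^2$ for every $k$, hence $\lambda_{\min}(\Psi) \gtrsim \|\psi\|_2^2$; combining with $\lambda_{\min}(\sum_l B_l^2) \geq c_4 L$ from Assumption \ref{fullrank} gives $\lambda_K(\sum_l Q_l^2) \gtrsim L\rho^2 \|\psi\|_2^4$.

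For the noise, since $\psi_i \in (0,1]$ one has $\|Q_l\|_2 \lesssim \rho \|\psi\|_2^2 \leq \rho n$ and $\|\mathrm{diag}(Q_l)\|_2 \lesssim \rho$, so the SBM arguments for $N_1$ and $N_4$ carry over to give $\|N_1\|_2,\|N_4\|_2 \lesssim L n \rho^2$. Concentration of $N_3 = \sum_l G_l^2 - \mathrm{diag}(\sum_l G_l^2)$ relies on the sparsity $n\rho \geq c_1 \log(L+n)$ through Theorem 4 of \cite{LeiJ2022Bias}, whose hypotheses only depend on $\max_{ij} Q_{l,ij} \leq \rho$ and hence apply without modification, yielding $\|N_3\|_2 \lesssim L^{1/2} n\rho \log(L+n)$. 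For the cross term $N_2 = \sum_l (P_l G_l^T + G_l P_l^T)$, a matrix-Bernstein argument using $\|P_l\|_2 \lesssim \rho \|\psi\|_2^2$ together with the standard bound $\|G_l\|_2 \lesssim \sqrt{n\rho}$ with high probability produces $\|N_2\|_2 \lesssim L^{1/2} n^{3/2}\rho^{3/2}\log^{1/2}(L+n)$. Under the assumed sparsity the $N_3$ contribution is dominated by that of $N_2$, so combining everything with Proposition 2.2 of \cite{vu2013minimax}, the Davis--Kahan theorem, and $\|\hat U - UZ\|_F \leq \sqrt{K}\|\hat U - UZ\|_2$ with fixed $K$ yields
\[
\|\hat U - UZ\|_F \lesssim \frac{Ln\rho^2 + L^{1/2} n^{3/2}\rho^{3/2}\log^{1/2}(L+n)}{L\rho^2 \|\psi\|_2^4} = \frac{n}{\|\psi\|_2^4} + \frac{n^{3/2}\log^{1/2}(L+n)}{L^{1/2}\rho^{1/2}\|\psi\|_2^4}.
\]

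The main obstacle I anticipate is ensuring that $\|N_2\|_2$ retains its SBM rate despite the nonuniform row-scaling imposed by $\mathrm{diag}(\psi)$. A naive operator-norm estimate based only on $\|P_l\|_2 \leq \rho n$ can be wasteful when $\|\psi\|_2 \ll \sqrt{n}$, and because the signal scales as $\|\psi\|_2^4$ in the denominator, any looseness here directly degrades the final rate; one should therefore exploit finer quantities such as $\|P_l\|_{2,\infty}$ and the independence of $G_l$ across $l$ inside the matrix-Bernstein computation. The remainder of the argument is essentially a transcription of Lemma \ref{eigenspace bound}.
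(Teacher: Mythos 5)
Your proposal is correct and follows essentially the same route as the paper: the identical four-term noise decomposition, the lower bound $\lambda_K(\sum_l Q_l^2)\gtrsim L\rho^2\|\psi\|_2^4$ obtained from $\lambda_{\min}(\Psi)\gtrsim\|\psi\|_2^2$ (Assumption E1) and $\lambda_{\min}(\sum_l B_l^2)\geq c_4 L$, the unchanged SBM-rate noise bounds (justified, as you note, by $\max_{ij}Q_{l,ij}\leq\rho$), and Davis--Kahan with Proposition 2.2 of \cite{vu2013minimax}. The paper likewise does not refine the $N_2$ bound for the $\psi$-scaling you flag as a potential obstacle; it simply reuses the crude bounds $\|P_l\|_2\leq n\rho$ from Lemma \ref{eigenspace bound}, which suffice for the stated rate.
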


\begin{proof} Following the approach used in the proof of Lemma \ref{eigenspace bound}, we separately bound the signal term and the noise terms. Recall that in the context of multi-layer degree-corrected SBMs, $P_l = Q_l - \mbox{diag}(Q_l)$, $Q_l=\rho \mbox{diag}(\psi) \Theta B_l  \Theta^T  \mbox{diag}(\psi)$ and $G_l = A_l - P_l$. Given the balanced effective community sizes assumption and the fixed number of communities, the minimum eigenvalue of $\Psi$ is lower bounded by $c \|\psi\|_2^2 $ for some positive constant $c$. Then we have
\begin{eqnarray*}
	\sum_{l=1}^LQ_l^2 & = & \rho^2 \tilde\Theta \Psi^{1/2} \left[\sum_{l=1}^L B_l \Psi B_l\right] \Psi^{1/2} \tilde\Theta^T \\
	& \succeq & c\rho^2\|\psi\|_2^2  \tilde\Theta \Psi^{1/2} \sum_{l=1}^L B_l^2 \Psi^{1/2} \tilde\Theta^T \\
	& \succeq & cL\rho^2\|\psi\|_2^4 \tilde\Theta \tilde\Theta^T,
\end{eqnarray*}
where $\succeq$ denotes the Loewner partial order, in particular, let $A$ and $B$ be two Hermitian matrices of order $p$, we say that $A \succeq B$ if $A - B$ is positive semi-definite. The latter $\succeq$ is derived from Assumption \ref{fullrank}. Recall that $\tilde\Theta$ is an orthogonal column matrix, the smallest eigenvalue of $\sum_{l=1}^LQ_l^2$ is lower bounded by $cL\rho^2\|\psi\|_2^4$. For the bounds of the spectral norms of all noise terms, they are consistent with those established in Lemma \ref{eigenspace bound}.

Combining the bounds of all terms, Proposition 2.2 of \cite{vu2013minimax} and the Davis-Kahan sin$\Theta$ theorem, there exists a $K \times K$ orthogonal matrix $Z$ such that
\begin{equation*}
	\|\widehat{U} - UZ\|_F  \leq  \sqrt{K}\|\widehat{U} - UZ\|_2  \leq c \frac{Ln\rho^2 + L^{1/2}n^{3/2}\rho^{3/2}\log^{1/2}(L+n)}{L\rho^2\|\psi\|_2^4}
\end{equation*} 
holds with high probability. The proof is completed.
\end{proof}


\subsection{Main proofs}\label{MainProofs}
\subsubsection*{Proof of Theorem \ref{clt}}
Recall that $Q_l = \rho\Theta B_l\Theta^T$ and the noise matrix $G_l = A_l - P_l$, where $P_l = Q_l - \text{diag}(Q_l)$ for all $l \in [L]$. Then the estimator $\hat{M_l}$ can be rearranged as
\begin{eqnarray}\label{El_detail}
	\hat{M_l} &=& \hat{U}^TQ_l\hat{U} + \hat{U}^TG_l\hat{U} - \hat{U}^T{\rm diag} (Q_l)\hat{U} \nonumber\\
	&=& \hat{U}^TUM_lU^T\hat{U} + \hat{U}^TG_l\hat{U} - \hat{U}^T{\rm diag} (Q_l)\hat{U} \nonumber\\
	&=&	(\hat{U}^TU - Z^T)M_l(U^T\hat{U}) + Z^TM_l(U^T\hat{U}-Z) + Z^TM_lZ\nonumber\\
	& & + ~(\hat{U}^T - Z^TU^T)G_l(\hat{U}-UZ) + Z^TU^TG_l(\hat{U}-UZ) - (Z^TU^T - \hat{U}^T)G_lUZ \nonumber\\
	& &	+ ~ Z^TU^TG_lUZ - \hat{U}^T{\rm diag} (Q_l)\hat{U}\nonumber\\
	&=&	Z^TM_lZ + Z^TU^TG_lUZ + E_l,
\end{eqnarray}
where $E_l$ in the final equality represents all terms from the penultimate equality excluding $Z^TM_lZ$ and $Z^TU^TG_lUZ$. Here, $Z = \arginf\limits_{Z\in \mathbb O(K)}\|U^T \hat{U} - Z\|_F$ is a $K\times K$ orthogonal matrix. Thus,
\begin{equation}\label{Transform}
	Z\hat{M_l}Z^T - M_l - ZE_lZ^T = U^TG_lU.
\end{equation}

We initiate our argument by establishing that $ZE_lZ^T$ tends towards zero as the number of nodes and layers increase, and the edge density decreases. We accomplish this by exerting control over each term in $E_l$.
\begin{eqnarray*}
	\|E_l\|_F &\leq& \|(\hat{U}^TU - Z^T)M_l(U^T\hat{U})\|_F + \|Z^TM_l(U^T\hat{U}-Z)\|_F\\
	& & +~\|(\hat{U}^T - Z^TU^T)G_l(\hat{U}-UZ)\|_F + 2\|Z^TU^TG_l(\hat{U}-UZ)\|_F \\
	& & +~\|\hat{U}^T{\rm diag} (Q_l)\hat{U}\|_F \\
	& \leq & \|\hat{U}^TU - Z^T\|_F\|M_l\|_2 + \|(U^T\hat{U}-Z)\|_F\|M_l\|_2 \\
	& & +~ \|\hat{U}-UZ\|_F^2\|G_l\|_2 + 2\|\hat{U}-UZ\|_F\|G_l\|_2 +\|{\rm diag}(Q_l)\|_2\|\hat U\|_F\\
	& = & 4(\|M_l\|_2 + \|G_l\|_2)\|\hat{U}-UZ\|_F^2 + 2\|\hat{U}-UZ\|_F\|G_l\|_2  + \sqrt{K}\|{\rm diag}(Q_l)\|_2.
\end{eqnarray*}
The upper bound of $\|\hat{U}-UZ\|_F$ is provided in Lemma \ref{eigenspace bound}. With regards to the term $M_l$, it holds that $\|M_l\|_2 = \|UM_lU^T\|_2 = \|Q_l\|_2$. For $G_l$, applying Lemma \ref{Spectral bound}, we have 
\begin{equation*}
	\|G_l\|_2 \leq c\sqrt{\|Q_l\|_{1, \infty}}
\end{equation*}
with probability at least $1 - O(n^{-1})$. Note that we use $c$ and $c'$ to represent the generic positive constants and it may be different from line to line. Consequently, it can be deduced that 
\begin{eqnarray*}
	\|ZE_lZ^T\|_F &\leq& c (\|Q_l\|_2 + \sqrt{\|Q_l\|_{1, \infty}})\left(\frac{1}{n^2} + \frac{\log(L+n)}{Ln\rho} \right) \nonumber\\
	& & ~+ c  \sqrt{\|Q_l\|_{1, \infty}} \left(\frac{1}{n} + \frac{\log^{1/2}(L+n)}{L^{1/2}n^{1/2}\rho^{1/2}} \right) + \sqrt{K}\rho
\end{eqnarray*}
with high probability. Additionally, given that every element of $Q_l$ is at most $\rho$, both $\|Q_l\|_2$ and $\|Q_l\|_{1,\infty}$ do not exceed $n\rho$. Thus, we can deduce that $\|ZE_lZ^T\|_F \leq c\left(\rho + \frac{\log^{1/2}(L+n)}{L^{1/2}} \right)$. As the number of layers $L$ increase, and the overall edge probability decrease, $ZE_lZ^T$ tends to vanish. 

Consequently, demonstrating the central limit theorem can be reduced to verifying the asymptotic normality of $U^TG_lU$.
The covariance of any pair $(U^TG_lU)_{s, t}$ and $(U^TG_lU)_{s', t'}$, where $1\leq s\leq t \leq K$ and $1\leq s'\leq t' \leq K$, is
\begin{eqnarray*}
	&   &{\rm Cov}\left((U^TG_lU)_{s, t},\;(U^TG_lU)_{s', t'}\right) \nonumber\\
	& = &{\rm Cov}\left(\sum_{1\leq i, j \leq n}U_{is}G_{l,ij}U_{jt},\;\sum_{1\leq i, j \leq n}U_{is'}G_{l,ij}U_{jt'}\right) \nonumber\\
	& = &\sum_{i=1}^{n-1}\sum_{j=i+1}^n(U_{is}U_{jt} + U_{js}U_{it} )\;(U_{is'}U_{jt'}+U_{js'}U_{it'})\;{\rm Cov}\left(G_{l,ij}, \;G_{l,ij}\right) \nonumber\\
	& = &\sum_{i=1}^{n-1}\sum_{j=i+1}^n(U_{is}U_{jt} + U_{js}U_{it} )\;(U_{is'}U_{jt'}+U_{js'}U_{it'})\;Q_{l,ij}(1 - Q_{l,ij}).
\end{eqnarray*}
That is the $\Sigma_{\frac{2s+t(t-1)}{2},\frac{2s'+t'(t'-1)}{2}}^l$ term defined in (\ref{sigma}). Regarding the expectation term, given that $\mathbb E(G_l) = 0$, it is straightforward to see that each element of $U^TG_lU$ has a zero-mean, as each element sums terms with zero-mean. In other words, we have $\mathbb E\left((U^TG_lU)_{s, t} \right) = 0$ for all $1\leq s \leq t \leq K$.
	
\textbf{Part (a):}  Note that
\begin{equation}\label{UGlU_st}
	(U^TG_lU)_{s, t} = \sum_{i=1}^{n-1}\sum_{j=i+1}^n(U_{is}U_{jt} + U_{js}U_{it})\; G_{l,ij} := \sum_{i=1}^{n-1}\sum_{j=i+1}^n F_{l,ij},
\end{equation}
where we let $F_{l,ij}$ denote  $(U_{is}U_{jt} + U_{js}U_{it})\;G_{l,ij}$. Given $l$, it is clear that the $F_{l,ij}$ terms are zero-mean and independent of each other for all $1\leq i<j \leq n$. By the definition of $U$ and according to Assumption \ref{bal}, we have $|F_{l,ij}| \leq c'/n$ for all $l\in [L]$ and $1\leq i<j \leq n$, where $c'$ is some positive constant. 
To affirm the Lindeberg condition, we proceed by calculating the sum of ${\rm Var}(F_{l,ij})$ for all $1\leq i<j \leq n$. Given Assumption \ref{var} and the premise of balanced community sizes, it follows that ${\rm Var}\left((U^TG_lU)_{s, t}\right) = \omega(1/n^2)$. Therefore, for any $\varepsilon > 0$ and sufficiently large $n$, we can state that $|F_{l,ij}| < \varepsilon \left[{\rm Var}\left((U^TG_lU)_{s, t}\right)\right]^{1/2}$.  This leads to the Lindeberg condition as given by
\begin{equation}\label{lindeberg}
	\lim\limits_{n\rightarrow \infty}\frac{1}{{\rm Var}\left((U^TG_lU)_{s, t}\right)}\sum_{i=1}^{n-1}\sum_{j=i+1}^n \mathbb E \left(|F_{l,ij}|^2 \mathbb I_{\{ |F_{l,ij}| > \varepsilon [{\rm Var}((U^TG_lU)_{s, t})]^{1/2}\}} \right) = 0
\end{equation}
is satisfied. It should be emphasised that here $U$, $G_l$ and $F_{l,ij}$ are all related to $n$. For simplicity, we have not notated the $n$ display. By the Lindeberg-Feller Central Limit Theorem, we have 
\begin{equation*}
	\left(\Sigma_{\frac{2s+t(t-1)}{2},\frac{2s+t(t-1)}{2}}^l\right)^{-1/2} (U^TG_lU)_{s, t} \rightarrow_d \mathcal N (0, 1)
\end{equation*}
for each $l\in[L]$. The claim follows by combining the above discussion with (\ref{Transform}).

\textbf{Part (b):} To complete the central limit theorem for vectorlized $U^TG_lU$, we first show that this term can be expressed as a summation of independent zero-mean random variables. Let $\widetilde{\Sigma}^l:= (\Sigma^l) ^ {-1/2}$ and consider an arbitrary vector $x\in \mathbb R^{K(K+1)/2}$.
\begin{eqnarray}\label{ver_UGlU}
	& &x^T(\Sigma^l)^{-1/2}{\rm vec}(U^TG_lU)\nonumber\\
	&=& \sum_{1\leq s\leq t \leq K}x_{\frac{2s+t(t-1)}{2}} \sum_{1\leq s'\leq t' \leq K} \tilde{\Sigma}_{\frac{2s+t(t-1)}{2},\frac{2s'+t'(t'-1)}{2}}^l \sum_{i=1}^{n-1}\sum_{j=i+1}^n(U_{is'}U_{jt'} + U_{js'}U_{it'})\;G_{l,ij} \nonumber \\
	&=& \sum_{i=1}^{n-1}\sum_{j=i+1}^n\left\{G_{l,ij}\sum_{1\leq s\leq t \leq K}x_{\frac{2s+t(t-1)}{2}} \sum_{1\leq s'\leq t' \leq K} \tilde{\Sigma}_{\frac{2s+t(t-1)}{2},\frac{2s'+t'(t'-1)}{2}}^l (U_{is'}U_{jt'} + U_{js'}U_{it'})\right\}\nonumber \\
	&:=& \sum_{i=1}^{n-1}\sum_{j=i+1}^n\bar{F}_{l,ij},
\end{eqnarray}
where we let $\bar{F}_{l,ij}$ denote the portion enclosed in braces in the second equality, and the first equality follows from (\ref{UGlU_st}). Given $l$,  it is obvious that the $\bar{F}_{l,ij}$ terms are zero-mean and independent of each other for all $1\leq i<j \leq n$. In accordance with Assumption \ref{cov eigenvalue}, we have
\begin{eqnarray*}
	|\bar{F}_{l,ij}| &\leq & \frac{c'}{n} \sum_{1\leq s\leq t \leq K}x_{\frac{2s+t(t-1)}{2}} \sum_{1\leq s'\leq t' \leq K} \left|\tilde{\Sigma}_{\frac{2s+t(t-1)}{2},\frac{2s'+t'(t'-1)}{2}}^l \right|  \nonumber \\
	&\leq &\frac{c'}{n}\frac{K(K+1)}{2}\|x\|_2 \left\{\sum_{\substack{1\leq s\leq t \leq K \\[3pt] 1\leq s'\leq t' \leq K}}\left(\tilde{\Sigma}_{\frac{2s+t(t-1)}{2},\frac{2s'+t'(t'-1)}{2}}^l \right)^2 \right\}^{1/2}\nonumber \\
	&\leq & \frac{c'}{n}K^3\|x\|_2\lambda_{\max}(\widetilde{\Sigma}^l)\nonumber \\
	&\leq & \frac{c'K^3\|x\|_2}{n\sqrt{\lambda_{\min}(\Sigma^l)}} = o(\|x_2\|_2)
\end{eqnarray*}
for all $l \in [L]$ and $1\leq i<j\leq n$. To affirm the Lindeberg condition, we proceed by calculating the sum of ${\rm Var}(\bar{F}_{l,ij})$ for all $1\leq i<j \leq n$. By a simple calculation, we have ${\rm Var}\left(x^T(\Sigma^l)^{-1/2}{\rm vec}(U^TG_lU)\right) = \|x\|_2^2$. Therefore, for any $\varepsilon > 0$ and sufficiently large $n$, it holds true that $|\bar{F}_{l,ij}| < \varepsilon \left[{\rm Var}\left(x^T(\Sigma^l)^{-1/2}{\rm vec}(U^TG_lU)\right)\right]^{1/2}$. This subsequently satisfies the Lindeberg condition as given by
\begin{equation*}\label{lindeberg_vec}
	\lim\limits_{n\rightarrow \infty}\frac{1}{\|x\|_2^2}\sum_{i=1}^{n-1}\sum_{j=i+1}^n \mathbb E \left(|\bar{F}_{l,ij}|^2 \mathbb I_{\{ |\bar{F}_{l,ij}| > \varepsilon[{\rm Var}(x^T(\Sigma^l)^{-1/2}{\rm vec}(U^TG_lU))]^{1/2}\}}\right) = 0.
\end{equation*}
We should also emphasize that $U$, $G_l$, and $\bar{F}_{l,ij}$ are all dependent on $n$. For the sake of simplicity, we have not explicitly displayed the dependence on $n$ in our notation. By the Lindeberg-Feller Central Limit Theorem and the Cram\'er-Wold theorem (Thoerem 3.9.5 of \cite{durrett2010probability}), we have 
\begin{equation*}
	\left(\Sigma^l\right)^{-1/2} {\rm vec}\left (U^TG_lU\right) \rightarrow_d \mathcal N (\bm 0, \bm I)
\end{equation*}
for each $l\in[L]$. Here, $\mathcal N (\bm 0, \bm I)$ is a $K(K+1)/2$ dimensional standard normal distribution. The claim follows by combining the above discussion with (\ref{Transform}). $\hfill\qedsymbol$

\subsubsection*{Proof of Theorem \ref{clt_dc}}
Consistent with the proof of Theorem \ref{clt}, a random matrix $E_l$ exists such that \ref{Transform} holds. According to Theorem \ref{eigenspace bound dc}, the bias term $E_l$ satisfies 
\begin{eqnarray*}
	\|ZE_lZ^T\|_F &\leq& c(\|Q_l\|_2 + \sqrt{\|Q_l\|_{1, \infty}}) \left( \frac{n^2}{\|\psi\|_2^8} + \frac{n^3\log(L+n)}{L\rho\|\psi\|_2^8} \right) \nonumber\\
	& & ~+ c\sqrt{\|Q_l\|_{1, \infty}}\left(\frac{n}{\|\psi\|_2^4} + \frac{n^{3/2}\log^{1/2}(L+n)}{L^{1/2}\rho^{1/2 }\|\psi\|_2^4} \right) + \sqrt{K}\rho
\end{eqnarray*}
with high probability. Furthermore, given that every element of $Q_l$ is at most $\rho$, neither $\|Q_l\|_2$ nor $\|Q_l\|_{1,\infty}$ exceeds $n\rho$. Thus, we can deduce that $\|ZE_lZ^T\|_F \leq c\left(\frac{n^2\log^{1/2}(L+n)}{L^{1/2}\|\psi\|_2^4}  + \max\{\frac{n^{3/2}\rho^{1/2}}{\|\psi\|_2^4}, \rho\} \right)$.
Under certain setting for $\psi$, $ZE_lZ^T$ tends to vanish as the number of layers $L$ increase, and as the overall edge density $\rho$ decrease.

To demonstrate the central limit theorem, it suffices to ascertain the asymptotic normality of $U^TG_lU$. For part (a), recall that the decomposition of $(UG_lU^T)_{s, t}$ in \ref{UGlU_st} for all $1 \leq s \leq t \leq K$. By the definition of $U$, that is $\tilde\Theta$ here, and according to Assumption \ref{bal_dc}, we have $F_{l,ij} \leq c/\|\psi\|_2^2$ for all $l \in [L]$ and $1 \leq i \leq j \leq n$. Combing with Assumption \ref{var_dc}, the Lindeberg condition in (\ref{lindeberg}) holds. For part (b), recall that the decomposition of $x^T(\Sigma^l)^{-1/2}{\rm vec}(U^TG_lU)$  and the definition of $\bar{F}_{l,ij}$ in (\ref{ver_UGlU}). Under Assumption \ref{cov eigenvalue dc}, it follows that $|\bar{F}_{l,ij}| \leq \frac{c \|x\|_2}{\|\psi\|_2^2\sqrt{\lambda_{\min}(\Sigma^l)}} = o(\|x\|_2)$. The rest proof is similar to that of Theorem \ref{clt}, we omit it here. $\hfill\qedsymbol$

\subsection{Auxiliary lemmas}\label{AuxiliaryLemmas}
\begin{lemma}[Theorem 5.2 of \cite{lei2015consistency}]\label{Spectral bound}
Let $A$ be the adjacency matrix of a random graph on $n$ nodes in which edges occur independently. Set $\mathbb E(A) = P = (p_{ij})$ for ${i,j=1,\ldots,n}$, and assume that $n \max_{i,j} p_{ij} < d$ for $d > c \log n$ and $c > 0$. Then, for any $r > 0$ there exists a constant $C = C(r, c)$ such that
\begin{equation*}
	\| A - P \|_2 \leq C \sqrt{d}
\end{equation*}
with probability at least $1 - n^{-r}$.
\end{lemma}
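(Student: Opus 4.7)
The plan is to follow the Feige–Ofek style discretization combined with a combinatorial discrepancy argument, since the claimed bound is $C\sqrt{d}$ without a logarithmic factor and vanilla matrix Bernstein would only yield $\|A-P\|_2 \lesssim \sqrt{d\log n}$. Throughout, let $X = A-P$, whose entries $X_{ij}$ are independent zero-mean random variables with $|X_{ij}|\leq 1$ and $\mathrm{Var}(X_{ij})\leq p_{ij}\leq d/n$.

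First I would reduce the spectral norm to a supremum over a discrete grid. Take $T \subset \mathbb{R}^n$ to be the set of vectors with coordinates in $(1/\sqrt{n})\mathbb{Z}$ lying in the unit ball; a standard approximation argument shows $\|X\|_2 \leq 4\max_{x,y\in T} x^T X y$. Since $|T|\leq e^{C n}$, a union bound over $T\times T$ costs only a factor of $e^{C'n}$, which is tolerable because each pair will be controlled on an event of probability $1-e^{-(r+C'+1)n}$. For each fixed pair $(x,y)\in T\times T$, partition the index set $[n]\times[n]$ into a light part $L(x,y)=\{(i,j) : |x_iy_j|\leq \sqrt{d}/n\}$ and its complement $H(x,y)$, and bound
\begin{equation*}
x^T X y = \sum_{(i,j)\in L(x,y)} x_i X_{ij} y_j \;+\; \sum_{(i,j)\in H(x,y)} x_i X_{ij} y_j
\end{equation*}
by controlling each piece separately.

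For the light part, each summand is bounded in absolute value by $\sqrt{d}/n$, and the total variance is at most $\sum_{i,j} x_i^2 y_j^2 p_{ij} \leq (d/n)\|x\|_2^2 \|y\|_2^2 \cdot n \leq d$, so Bernstein's inequality gives $\big|\sum_{L} x_i X_{ij} y_j\big|\leq C_1\sqrt{d}$ with probability at least $1-\exp(-C_1 n)$ for $C_1$ large enough in terms of $r$, which survives the union bound over $T\times T$. For the heavy part, Bernstein is too weak because individual terms are too large; instead I would invoke the standard discrepancy property of the random graph, which says that with probability at least $1-n^{-r-1}$, for every pair of subsets $I,J\subseteq [n]$ the edge count $e(I,J):=\sum_{i\in I,j\in J} A_{ij}$ satisfies
\begin{equation*}
\big| e(I,J) - \mathbb{E}\,e(I,J) \big| \;\leq\; C_2\Big(\sqrt{d\,|I|\,|J|}\;\vee\;\big(|I|\vee|J|\big)\log\!\tfrac{en}{|I|\vee|J|}\Big).
\end{equation*}
Dyadically decompose $x$ and $y$ by setting $I_\alpha = \{i : 2^{\alpha-1}/\sqrt{n} < |x_i|\leq 2^\alpha/\sqrt{n}\}$ and $J_\beta$ analogously (so $|I_\alpha|\leq n\,2^{-2(\alpha-1)}$), write
\begin{equation*}
\Big|\sum_{H} x_i X_{ij} y_j\Big| \;\leq\; \sum_{\alpha,\beta} \tfrac{2^\alpha}{\sqrt{n}}\tfrac{2^\beta}{\sqrt{n}}\, \big| e(I_\alpha,J_\beta)-\mathbb{E}\,e(I_\alpha,J_\beta)\big|,
\end{equation*}
and apply the discrepancy bound to each $(I_\alpha,J_\beta)$ appearing in $H$. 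A careful bookkeeping, splitting into cases according to whether $|I_\alpha||J_\beta|$ is large or small and using the heaviness constraint $2^{\alpha+\beta}\geq \sqrt{d}$, collapses the dyadic sum to $O(\sqrt{d})$.

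The hardest step will be the heavy part, both in proving the discrepancy lemma at the required scale (this itself requires a Chernoff bound combined with a union bound over $\binom{n}{|I|}\binom{n}{|J|}$ subsets, with the $\log(en/|I|\vee|J|)$ factor arising exactly from that combinatorial enumeration) and in ensuring the dyadic summation does not leak an extra $\log n$ factor. The canonical trick to close this is to observe that very small rectangles $|I_\alpha||J_\beta|$ can only contribute if $2^\alpha$ or $2^\beta$ is correspondingly large, and then $\|x\|_2^2\leq 1$ forces $|I_\alpha|$ to be small enough that the logarithmic term is absorbed; summing geometric series in $\alpha+\beta$ delivers the clean $\sqrt{d}$ rate. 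Assembling the light and heavy estimates, taking the union bound over $T\times T$, and converting back to the spectral norm yields the stated conclusion.
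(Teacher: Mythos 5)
The paper does not prove this lemma at all: it is quoted verbatim as Theorem 5.2 of \cite{lei2015consistency} and used as a black box, so there is no in-paper proof to compare against. Your sketch is, however, precisely the Friedman--Kahn--Szemer\'edi/Feige--Ofek argument that Lei and Rinaldo themselves use to prove it (lattice discretization of the unit ball, light/heavy splitting at the threshold $\sqrt{d}/n$, Bernstein plus a union bound over the net for light pairs, and a uniform edge-discrepancy property with dyadic bookkeeping for heavy pairs), so in substance you have reproduced the correct and canonical route.

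Two points in your writeup need repair. First, the light-pair variance: $\sum_{i,j} x_i^2 y_j^2 p_{ij} \leq (d/n)\,\|x\|_2^2\|y\|_2^2 \leq d/n$, not $d$ --- the extra factor of $n$ you inserted has no source, since $\sum_{i,j} x_i^2 y_j^2 = \|x\|_2^2\|y\|_2^2$ already accounts for all pairs. This is not cosmetic: with variance proxy $d$ (and maximal summand $\sqrt{d}/n$), Bernstein at level $t = C_1\sqrt{d}$ gives an exponent of order $C_1^2 d/d = O(1)$, which cannot absorb the $e^{Cn}$ union bound over $T\times T$; with the correct $d/n$ the exponent is of order $C_1 n$ and the argument closes, which is evidently the computation your stated conclusion presupposes. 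Second, a grid with spacing $1/\sqrt{n}$ corresponds to $\delta = 1$ in the approximation inequality $\|X\|_2 \leq (1-\delta)^{-2}\sup_{x,y\in T} |x^T X y|$, which degenerates; to get your factor of $4$ you need spacing $1/(2\sqrt{n})$, i.e.\ $\delta = 1/2$ (this only changes the constant in $|T| \leq e^{Cn}$). The heavy-pair portion is stated at the right level of generality and you correctly identify it as the delicate step; note that the standard bookkeeping also requires separately disposing of the deterministic piece $\sum_{H} |x_i|\,p_{ij}\,|y_j| \leq \sqrt{d}$, which follows from the heaviness constraint $|x_i y_j| > \sqrt{d}/n$ together with $p_{ij} \leq d/n$.
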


\subsection{Additional results for data analysis}\label{additional real data}
\begin{table}[h]
    \caption{Categories of cereals as defined by the FAO, with `n.e.c.' representing `not elsewhere classified'. Cereal numbers correspond to layers in the WFAT network depicted in Figure \ref{testing for real data trade}.}
\centering
\begin{tabular}[]{|c|}
\hline
\textbf{1}.Malt, whether or not roasted;\;
\textbf{2}.Rice;\;
\textbf{3}.Canary seed;\;
\textbf{4}.Buckwheat;\; 
\textbf{5}.Germ of \\ maize;\;
\textbf{6}.Triticale;\;
\textbf{7}.Cereals n.e.c.;\;
\textbf{8}.Millet;\;
\textbf{9}.Sorghum;\;
\textbf{10}.Bran of maize;\;
\textbf{11}.Bran \\ of cereals n.e.c.;\;
\textbf{12}.Flour of mixed grain;\;
\textbf{13}.Flour of rice;\;
\textbf{14}.Rye;\;
\textbf{15}.Malt extract; \\
\textbf{16}.Rice, milled (husked);\;
\textbf{17}.Oats;\;
\textbf{18}.Cereal preparations;\;
\textbf{19}.Oats, rolled;\;
\textbf{20}.Bran of \\ wheat;\;
\textbf{21}.Flour of cereals n.e.c.;\;
\textbf{22}.Rice, broken;\;
\textbf{23}.Flour of maize;\;
\textbf{24}.Gluten feed \\ and meal;\;
\textbf{25}.Bread;\;
\textbf{26}.Husked rice;\;
\textbf{27}.Wheat and meslin flour;\;
\textbf{28}.Communion \\ wafers, empty cachets of a kind suitable for pharmaceutical use, sealing wafers, rice \\ paper and similar products;\;
\textbf{29}.Barley;\;
\textbf{30}.Breakfast cereals;\;
\textbf{31}.Uncooked pasta, not \\ stuffed or otherwise prepared;\;
\textbf{32}.Maize (corn);\;
\textbf{33}.Wheat;\;
\textbf{34}.Mixes and doughs for \\ the preparation of bakers' wares;\;
\textbf{35}.Food preparations of flour, meal or malt extract;\\
\textbf{36}.Rice, milled;\;
\textbf{37}.Pastry \\
\hline
\end{tabular}
    \label{items}
\end{table}

\begin{figure}[!htbp]
    \centering
    \includegraphics[width=0.45\textwidth]{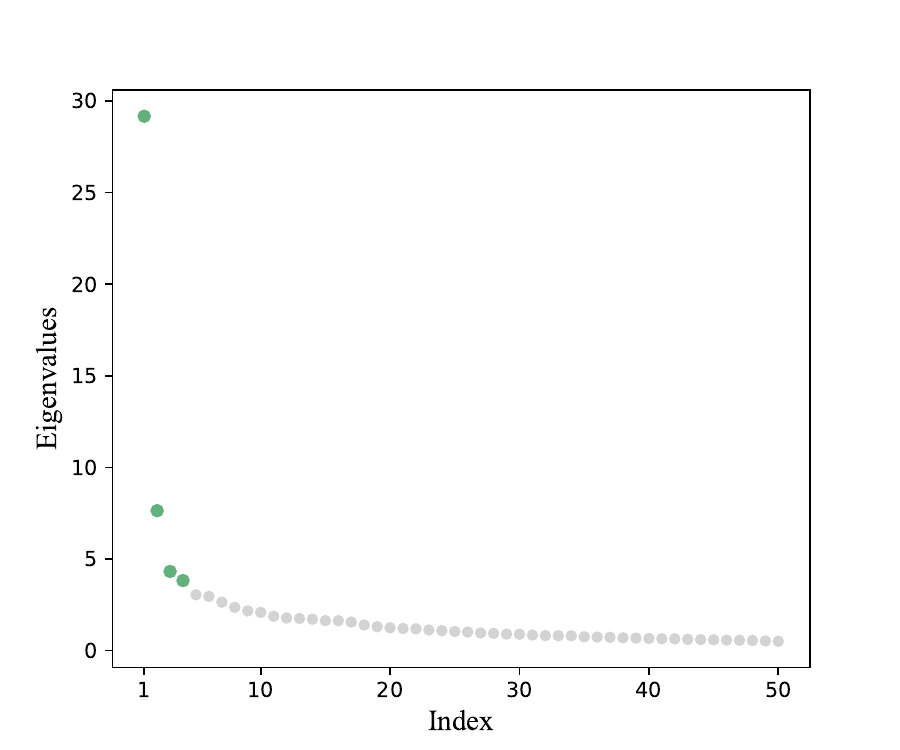}
    \caption{Scree plot of the top eigenvalues of $\sum_{l=1}^L A_l^2 / n$ of the WFAT data graph. The plot show the 50 largest eigenvalues ordered by magnitude.}
    \label{scree plot}
\end{figure}	
\noindent This section presents the additional experimental results that are not shown in the main text. In Section \ref{Real data analysis}, for the WFAT dataset, we initially chose the number of communities as $K = 5$ in the multi-layer SBM analysis, corresponding naturally to the geographical division into five continents. Here, we adopt an alternative approach to determine $K$. By observing an elbow in the scree plot of the top absolute eigenvalues of the sum of squared adjacency matrices at the 4th position, as shown in Figure \ref{scree plot}, we select $K = 4$. Figure \ref{WFAT data K = 4} presents the outcomes of the multiple comparisons using a Holm type step-down procedure, where olive green indicates the acceptance and white denotes the rejection of the individual hypotheses. We control the Holm procedure to ensure that the family-wise error rate is no greater than $\alpha = 0.05$. In this setup, layer $l$ corresponds to the cereal numbered $l$ in Table \ref{items}. Similar to Figure \ref{mtp_holm}, the outcomes of this multiple comparison procedure also exhibit several distinct blocks where all individual hypotheses are simultaneously accepted within each block. Moreover, the results are largely consistent with those observed when $K = 5$.  This strongly suggests that some global cereal trade patterns are similar and also demonstrates the stability of our method.

\begin{figure}[!h]
    \centering
    	\includegraphics[width=0.5\textwidth]{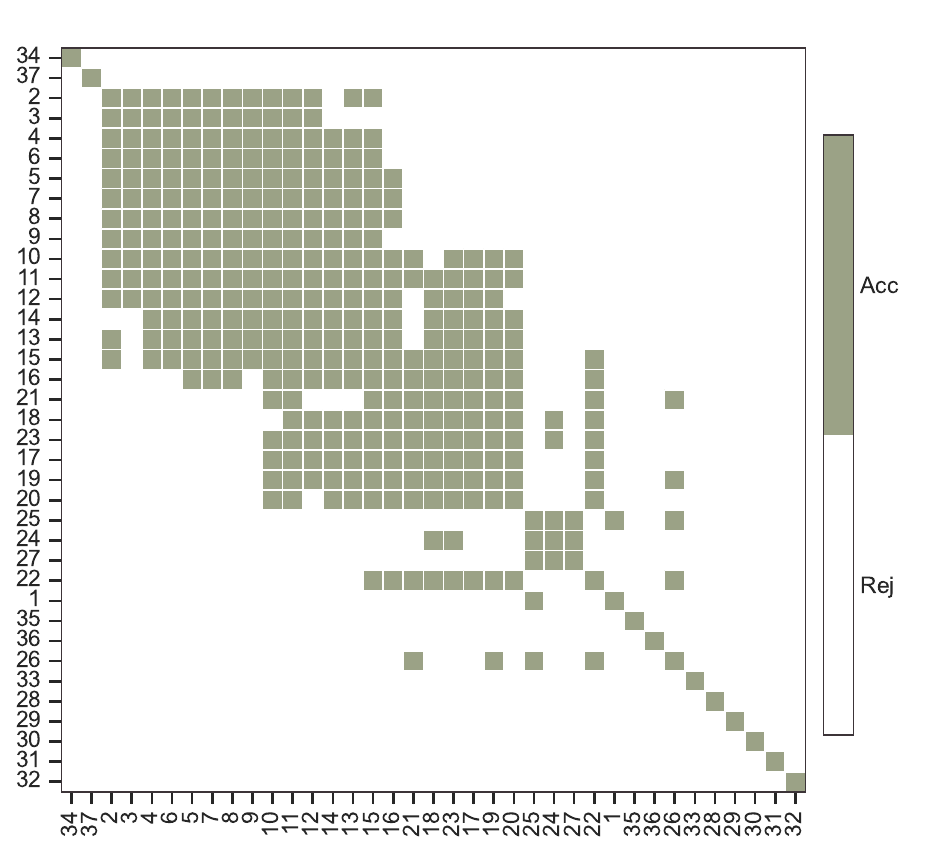}
    \caption{Results of the multiple comparisons conducted using a Holm type step-down procedure for the WFAT data, with the number of  communities set to $K = 4$. Olive green indicates the acceptance of the individual hypotheses. Each layer $l$ corresponding to the cereal numbered $l$ in Table \ref{items}.}
    \label{WFAT data K = 4}
\end{figure}

\newpage
\bibliographystyle{chicago}
\bibliography{reference}

\begin{thebibliography}{}

\bibitem[\protect\citeauthoryear{Agterberg and Cape}{Agterberg and
  Cape}{2023}]{agterberg2023overview}
Agterberg, J. and J.~Cape (2023).
\newblock An overview of asymptotic normality in stochastic blockmodels:
  Cluster analysis and inference.
\newblock {\em arXiv preprint arXiv:2305.06353\/}.

\bibitem[\protect\citeauthoryear{Agterberg, Lubberts, and Arroyo}{Agterberg
  et~al.}{2022}]{agterberg2022joint}
Agterberg, J., Z.~Lubberts, and J.~Arroyo (2022).
\newblock Joint spectral clustering in multilayer degree-corrected stochastic
  blockmodels.
\newblock {\em arXiv preprint arXiv:2212.05053\/}.

\bibitem[\protect\citeauthoryear{Airoldi, Blei, Fienberg, and Xing}{Airoldi
  et~al.}{2008}]{airoldi2008mixed}
Airoldi, E.~M., D.~Blei, S.~Fienberg, and E.~Xing (2008).
\newblock Mixed membership stochastic blockmodels.
\newblock {\em Journal of Machine Learning Research\/}~{\em 9\/}(65),
  1981--2014.

\bibitem[\protect\citeauthoryear{Arroyo, Athreya, Cape, Chen, Priebe, and
  Vogelstein}{Arroyo et~al.}{2021}]{arroyo2021inference}
Arroyo, J., A.~Athreya, J.~Cape, G.~Chen, C.~E. Priebe, and J.~T. Vogelstein
  (2021).
\newblock Inference for multiple heterogeneous networks with a common invariant
  subspace.
\newblock {\em Journal of Machine Learning Research\/}~{\em 22\/}(142), 1--49.

\bibitem[\protect\citeauthoryear{Bhatia}{Bhatia}{1997}]{Bhatia1997matrix}
Bhatia, R. (1997).
\newblock {\em Matrix analysis}.
\newblock Springer-Verlag, New York.

\bibitem[\protect\citeauthoryear{Bhattacharyya and Chatterjee}{Bhattacharyya
  and Chatterjee}{2018}]{bhattacharyya2018spectral}
Bhattacharyya, S. and S.~Chatterjee (2018).
\newblock Spectral clustering for multiple sparse networks: I.
\newblock {\em arXiv preprint arXiv:1805.10594\/}.

\bibitem[\protect\citeauthoryear{Bickel, Choi, Chang, and Zhang}{Bickel
  et~al.}{2013}]{bickel2013asymptotic}
Bickel, P., D.~Choi, X.~Chang, and H.~Zhang (2013).
\newblock Asymptotic normality of maximum likelihood and its variational
  approximation for stochastic blockmodels1.
\newblock {\em The Annals of Statistics\/}~{\em 41\/}(4), 1922--1943.

\bibitem[\protect\citeauthoryear{De~Domenico, Nicosia, Arenas, and
  Latora}{De~Domenico et~al.}{2015}]{de2015structural}
De~Domenico, M., V.~Nicosia, A.~Arenas, and V.~Latora (2015).
\newblock Structural reducibility of multilayer networks.
\newblock {\em Nature Communications\/}~{\em 6}, 6864.

\bibitem[\protect\citeauthoryear{Dudoit, Van Der~Laan, and van~der Laan}{Dudoit
  et~al.}{2008}]{dudoit2008multiple}
Dudoit, S., M.~J. Van Der~Laan, and M.~J. van~der Laan (2008).
\newblock {\em Multiple testing procedures with applications to genomics}.
\newblock Springer, New York.

\bibitem[\protect\citeauthoryear{Durrett}{Durrett}{2010}]{durrett2010probability}
Durrett, R. (2010).
\newblock {\em Probability: theory and examples}.
\newblock Cambridge University Press, New York.

\bibitem[\protect\citeauthoryear{Fan, Fan, Han, and Lv}{Fan
  et~al.}{2022a}]{fan2022asymptotic}
Fan, J., Y.~Fan, X.~Han, and J.~Lv (2022a).
\newblock Asymptotic theory of eigenvectors for random matrices with diverging
  spikes.
\newblock {\em Journal of the American Statistical Association\/}~{\em
  117\/}(538), 996--1009.

\bibitem[\protect\citeauthoryear{Fan, Fan, Han, and Lv}{Fan
  et~al.}{2022b}]{fan2022simple}
Fan, J., Y.~Fan, X.~Han, and J.~Lv (2022b).
\newblock Simple: Statistical inference on membership profiles in large
  networks.
\newblock {\em Journal of the Royal Statistical Society Series B: Statistical
  Methodology\/}~{\em 84\/}(2), 630--653.

\bibitem[\protect\citeauthoryear{Fan, Wang, Wang, and Zhu}{Fan
  et~al.}{2019}]{fan2019distributed}
Fan, J., D.~Wang, K.~Wang, and Z.~Zhu (2019).
\newblock Distributed estimation of principal eigenspaces.
\newblock {\em The Annals of Statistics\/}~{\em 47\/}(6), 3009--3031.

\bibitem[\protect\citeauthoryear{Han, Xu, and Airoldi}{Han
  et~al.}{2015}]{han2015consistent}
Han, Q., K.~Xu, and E.~Airoldi (2015).
\newblock Consistent estimation of dynamic and multi-layer block models.
\newblock In {\em International Conference on Machine Learning}, pp.\
  1511--1520. PMLR.

\bibitem[\protect\citeauthoryear{He, Knowles, and Marcozzi}{He
  et~al.}{2019}]{he2019local}
He, Y., A.~Knowles, and M.~Marcozzi (2019).
\newblock {Local law and complete eigenvector delocalization for supercritical
  Erd{\H o}s--R{\'e}nyi graphs}.
\newblock {\em The Annals of Probability\/}~{\em 47\/}(5), 3278 -- 3302.

\bibitem[\protect\citeauthoryear{Holland, Laskey, and Leinhardt}{Holland
  et~al.}{1983}]{holland1983stochastic}
Holland, P.~W., K.~B. Laskey, and S.~Leinhardt (1983).
\newblock Stochastic blockmodels: First steps.
\newblock {\em Social Networks\/}~{\em 5\/}(2), 109--137.

\bibitem[\protect\citeauthoryear{Holme and Saram{\"a}ki}{Holme and
  Saram{\"a}ki}{2012}]{holme2012temporal}
Holme, P. and J.~Saram{\"a}ki (2012).
\newblock Temporal networks.
\newblock {\em Physics Reports\/}~{\em 519\/}(3), 97--125.

\bibitem[\protect\citeauthoryear{Jin, Ke, Luo, and Wang}{Jin
  et~al.}{2023}]{jin2022optimal}
Jin, J., Z.~T. Ke, S.~Luo, and M.~Wang (2023).
\newblock Optimal estimation of the number of network communities.
\newblock {\em Journal of the American Statistical Association\/}~{\em
  118\/}(543), 2101--2116.

\bibitem[\protect\citeauthoryear{Jing, Li, Lyu, and Xia}{Jing
  et~al.}{2021}]{jing2021community}
Jing, B.-Y., T.~Li, Z.~Lyu, and D.~Xia (2021).
\newblock Community detection on mixture multilayer networks via regularized
  tensor decomposition.
\newblock {\em The Annals of Statistics\/}~{\em 49\/}(6), 3181--3205.

\bibitem[\protect\citeauthoryear{Karrer and Newman}{Karrer and
  Newman}{2011}]{karrer2011stochastic}
Karrer, B. and M.~E. Newman (2011).
\newblock Stochastic blockmodels and community structure in networks.
\newblock {\em Physical Review E\/}~{\em 83\/}(1), 016107.

\bibitem[\protect\citeauthoryear{Kivel{\"a}, Arenas, Barthelemy, Gleeson,
  Moreno, and Porter}{Kivel{\"a} et~al.}{2014}]{kivela2014multilayer}
Kivel{\"a}, M., A.~Arenas, M.~Barthelemy, J.~P. Gleeson, Y.~Moreno, and M.~A.
  Porter (2014).
\newblock Multilayer networks.
\newblock {\em Journal of Complex Networks\/}~{\em 2\/}(3), 203--271.

\bibitem[\protect\citeauthoryear{Lehmann and Romano}{Lehmann and
  Romano}{2005}]{lehmann2005testing}
Lehmann, E.~L. and J.~P. Romano (2005).
\newblock {\em Testing statistical hypotheses}.
\newblock Springer, New York.

\bibitem[\protect\citeauthoryear{Lei and Lin}{Lei and Lin}{2023}]{LeiJ2022Bias}
Lei, J. and K.~Z. Lin (2023).
\newblock Bias-adjusted spectral clustering in multi-layer stochastic block
  models.
\newblock {\em Journal of the American Statistical Association\/}~{\em
  118\/}(544), 2433--2445.

\bibitem[\protect\citeauthoryear{Lei and Rinaldo}{Lei and
  Rinaldo}{2015}]{lei2015consistency}
Lei, J. and A.~Rinaldo (2015).
\newblock Consistency of spectral clustering in stochastic block models.
\newblock {\em The Annals of Statistics\/}~{\em 43\/}(1), 215--237.

\bibitem[\protect\citeauthoryear{Mucha, Richardson, Macon, Porter, and
  Onnela}{Mucha et~al.}{2010}]{mucha2010community}
Mucha, P.~J., T.~Richardson, K.~Macon, M.~A. Porter, and J.-P. Onnela (2010).
\newblock Community structure in time-dependent, multiscale, and multiplex
  networks.
\newblock {\em Science\/}~{\em 328\/}(5980), 876--878.

\bibitem[\protect\citeauthoryear{Noble}{Noble}{2009}]{noble2009does}
Noble, W.~S. (2009).
\newblock How does multiple testing correction work?
\newblock {\em Nature Biotechnology\/}~{\em 27\/}(12), 1135--1137.

\bibitem[\protect\citeauthoryear{Noroozi and Pensky}{Noroozi and
  Pensky}{2024}]{noroozi2022sparse}
Noroozi, M. and M.~Pensky (2024).
\newblock Sparse subspace clustering in diverse multiplex network model.
\newblock {\em Journal of Multivariate Analysis\/}~{\em 203}, 105333.

\bibitem[\protect\citeauthoryear{Paul and Chen}{Paul and
  Chen}{2016}]{paul2016consistent}
Paul, S. and Y.~Chen (2016).
\newblock Consistent community detection in multi-relational data through
  restricted multi-layer stochastic blockmodel.
\newblock {\em Electronic Journal of Statistics\/}~{\em 10\/}(2), 3807--3870.

\bibitem[\protect\citeauthoryear{Paul and Chen}{Paul and
  Chen}{2020}]{paul2020spectral}
Paul, S. and Y.~Chen (2020).
\newblock Spectral and matrix factorization methods for consistent community
  detection in multi-layer networks.
\newblock {\em The Annals of Statistics\/}~{\em 48\/}(1), 230--250.

\bibitem[\protect\citeauthoryear{Rubin-Delanchy, Cape, Tang, and
  Priebe}{Rubin-Delanchy et~al.}{2022}]{rubin2022statistical}
Rubin-Delanchy, P., J.~Cape, M.~Tang, and C.~E. Priebe (2022).
\newblock A statistical interpretation of spectral embedding: The generalised
  random dot product graph.
\newblock {\em Journal of the Royal Statistical Society Series B: Statistical
  Methodology\/}~{\em 84\/}(4), 1446--1473.

\bibitem[\protect\citeauthoryear{Rudelson and Vershynin}{Rudelson and
  Vershynin}{2015}]{rudelson2015Delocalization}
Rudelson, M. and R.~Vershynin (2015).
\newblock {Delocalization of eigenvectors of random matrices with independent
  entries}.
\newblock {\em Duke Mathematical Journal\/}~{\em 164\/}(13), 2507--2538.

\bibitem[\protect\citeauthoryear{Su, Wang, and Zhang}{Su
  et~al.}{2020}]{su2020strong}
Su, L., W.~Wang, and Y.~Zhang (2020).
\newblock Strong consistency of spectral clustering for stochastic block
  models.
\newblock {\em IEEE Transactions on Information Theory\/}~{\em 66\/}(1),
  324--338.

\bibitem[\protect\citeauthoryear{Su, Guo, Chang, and Yang}{Su
  et~al.}{2024}]{su2023}
Su, W., X.~Guo, X.~Chang, and Y.~Yang (2024).
\newblock Spectral co-clustering in multi-layer directed networks.
\newblock {\em Computational Statistics \& Data Analysis\/}~{\em 198}, 107987.

\bibitem[\protect\citeauthoryear{Tang, Cape, and Priebe}{Tang
  et~al.}{2022}]{tang2022asymptotically}
Tang, M., J.~Cape, and C.~E. Priebe (2022).
\newblock Asymptotically efficient estimators for stochastic blockmodels: The
  naive mle, the rank-constrained mle, and the spectral estimator.
\newblock {\em Bernoulli\/}~{\em 28\/}(2), 1049--1073.

\bibitem[\protect\citeauthoryear{Tang and Priebe}{Tang and
  Priebe}{2018}]{tang2018limit}
Tang, M. and C.~E. Priebe (2018).
\newblock Limit theorems for eigenvectors of the normalized laplacian for
  random graphs.
\newblock {\em The Annals of Statistics\/}~{\em 46\/}(5), 2360--2415.

\bibitem[\protect\citeauthoryear{Valles-Catala, Massucci, Guimera, and
  Sales-Pardo}{Valles-Catala et~al.}{2016}]{valles2016multilayer}
Valles-Catala, T., F.~A. Massucci, R.~Guimera, and M.~Sales-Pardo (2016).
\newblock Multilayer stochastic block models reveal the multilayer structure of
  complex networks.
\newblock {\em Physical Review X\/}~{\em 6\/}(1), 011036.

\bibitem[\protect\citeauthoryear{Vu and Lei}{Vu and Lei}{2013}]{vu2013minimax}
Vu, V.~Q. and J.~Lei (2013).
\newblock Minimax sparse principal subspace estimation in high dimensions.
\newblock {\em The Annals of Statistics\/}~{\em 41\/}(6), 2905--2947.

\bibitem[\protect\citeauthoryear{Young and Scheinerman}{Young and
  Scheinerman}{2007}]{young2007random}
Young, S.~J. and E.~R. Scheinerman (2007).
\newblock Random dot product graph models for social networks.
\newblock In {\em International Workshop on Algorithms and Models for the
  Web-Graph}, pp.\  138--149. Springer.

\bibitem[\protect\citeauthoryear{Zhang, Guo, and Chang}{Zhang
  et~al.}{2022}]{zhang2022randomized}
Zhang, H., X.~Guo, and X.~Chang (2022).
\newblock Randomized spectral clustering in large-scale stochastic block
  models.
\newblock {\em Journal of Computational and Graphical Statistics\/}~{\em
  31\/}(3), 887--906.

\bibitem[\protect\citeauthoryear{Zheng and Tang}{Zheng and
  Tang}{2022}]{zheng2022limit}
Zheng, R. and M.~Tang (2022).
\newblock Limit results for distributed estimation of invariant subspaces in
  multiple networks inference and pca.
\newblock {\em arXiv preprint arXiv:2206.04306\/}.

\end{thebibliography}
\end{document}